\newtheorem{theorem}{Theorem}[section]
\newtheorem{lemma}[theorem]{Lemma}
\newtheorem{proposition}[theorem]{Proposition}
\theoremstyle{definition}
\newtheorem{definition}[theorem]{Definition}
\newtheorem{remark}[theorem]{Remark}
\numberwithin{equation}{section}
\newcommand{\eps}{{\varepsilon}}
\newcommand{\Lip}{{\rm {Lip}}}
\newcommand{\dist}{{\rm {dist}}}
\newcommand{\supp}{{\rm supp}\,}
\newcommand{\res}{\mathop{\hbox{\vrule height 7pt width .3pt depth 0pt
\vrule height .3pt width 5pt depth 0pt}}\nolimits}
\newcommand{\Id}{{\rm Id}\,}
\newcommand{\mass}{{\mathbf{M}}}
\newcommand{\Iqs}{{\mathcal{A}}_Q(\R^{n})}
\newcommand{\Iq}{{\mathcal{A}}_Q}
\def\a#1{\left\llbracket{#1}\right\rrbracket}
\newcommand{\abs}[1]{\left|#1\right|}
\newcommand{\norm}[2]{\left\|#1\right\|_{#2}}
\newcommand{\ra}{\right\rangle}
\newcommand{\la}{\left\langle}
\newcommand{\D}{\textup{Dir}}
\newcommand{\de}{\partial}
\newcommand{\etaa}{{\bm{\eta}}}
\newcommand{\ph}{\varphi}
\newcommand{\graph}{\mathop{graph}}
\newcommand{\cG}{{\mathcal{G}}}
\newcommand{\cH}{{\mathcal{H}}}
\newcommand\R{{\mathbb R}}
\newcommand\N{{\mathbb N}}
\newcommand\C{{\mathbb C}}
\newcommand\sD{{\mathscr D}}
\newcommand\sV{{\mathscr V}}
\title[Complex varieties and higher integrability]{Complex varieties and higher integrability of $\D$-minimizing
$Q$-valued functions}
\author[E.~N.~Spadaro]{Emanuele Nunzio Spadaro}
\address{Universit\"at Z\"urich}
\email{emanuele.spadaro@math.uzh.ch}
\begin{document}

\begin{abstract}
We provide new elementary proofs of the following two results:
every complex variety is locally the graphs of a $\D$-minimizing function,
first proved by Almgren \cite{Alm};
the gradients of $\D$-minimizing functions,
in principle square-summable, are $p$-integrable for some $p>2$,
proved by De Lellis and the author \cite{DLSp2}.
In the planar case, we prove that our integrability exponents are optimal.
\end{abstract}

\maketitle

\section{Introduction}
Almgren developed the theory of $\D$-minimizing multi-valued
functions in his big regularity paper \cite{Alm} as a first
step toward the regularity of area-minimizing currents in
codimension bigger than $1$.
Following the pioneering ideas of De Giorgi, the starting point was
the approximation of minimal currents via harmonic functions,
which are the minimizers of the first non-constant term in the expansion
of the area functional: the Dirichlet energy.
However, due to the unavoidable phenomenon of branching points
as, for example, in the area-minimizing currents induced by complex varieties,
he needed to develop the theory of $\D$-minimizing $Q$-valued functions,
that are multi-valued functions minimizing a suitable Dirichlet energy.

In this paper, following the work in \cite{DLSp},
we address two questions on Almgren's $Q$-valued functions:
we show that complex varieties are locally graphs of $\D$-minimizing functions and
prove the higher integrability of the gradient of a $\D$-minimizing $Q$-function.
\begin{theorem}\label{t:complex}
Let $\mathscr{V}\subseteq \C^\mu\times\C^\nu\simeq\R^{2\mu}\times\R^{2\nu}$
be an irreducible holomorphic variety
which is a $Q:1$-cover of the ball $B_2\subseteq\C^\mu$
under the orthogonal projection.
Then, there exists a $\D$-minimizing $Q$-valued function
$f\in W^{1,2}(B_1,\Iq(\R^{2\nu}))$ such that
$\graph(f)=\mathscr{V}\cap (B_1\times\C^\nu)$.
\end{theorem}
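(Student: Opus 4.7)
My plan is in three parts: build $f$ on the regular part of $\mathscr V$ from its sheet structure, extend it as a Sobolev $Q$-function across the branch locus, and prove $\D$-minimality by combining a pointwise Cauchy--Schwarz inequality with Stokes' theorem for a suitable closed form.

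\textbf{Steps 1--2 (construction and Sobolev regularity).} Standard complex analysis supplies a proper analytic branch locus $B\subset B_2$, necessarily of real Hausdorff codimension at least $2$, such that $\pi\colon\mathscr V\setminus\pi^{-1}(B)\to B_2\setminus B$ is an unramified holomorphic $Q$-cover. Labelling the sheets locally as graphs of holomorphic maps $g_1,\dots,g_Q$, I set $f(z)=\sum_{i=1}^Q\a{g_i(z)}$, well defined globally by permutation invariance. Since $\mathscr V$ is bounded over $\bar B_1$, so is $f$; the Cauchy--Riemann equations give $|Dg_i|_{\R}^2=2|\partial g_i|_{\C}^2$, whence the area formula for holomorphic graphs yields
\[
\int_{B_1\setminus B}|Df|^2\,dx\ \le\ 2\bigl(\mass(\mathscr V\cap(B_1\times\C^\nu))-Q|B_1|\bigr)\ <\ \infty.
\]
As $B$ has zero $W^{1,2}$-capacity and $f$ is bounded with finite energy on the complement, a standard removable-singularity argument gives $f\in W^{1,2}(B_1,\Iq(\R^{2\nu}))$ with $\graph(f)=\mathscr V\cap(B_1\times\C^\nu)$.

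\textbf{Step 3 ($\D$-minimality).} Let $\omega_b,\omega_f$ be the K\"ahler forms on the base and fibre and set $\alpha:=\omega_b^{\mu-1}\wedge\omega_f/(\mu-1)!$, a closed $2\mu$-form on $\C^\mu\times\C^\nu$. For a single-valued smooth $g\colon B_1\to\C^\nu$ write the graph pullback as $\graph^*\alpha=N(Dg)\,dx$, where $N(Dg)=\mathrm{tr}_{\omega_b}(g^*\omega_f)$. Choosing a complex $\omega_b$-orthonormal frame $v_1,\dots,v_\mu$ with complex structure $J$, a Cauchy--Schwarz computation yields the pointwise identity
\[
|Dg|^2-2\,N(Dg)\ =\ \sum_{k=1}^\mu\bigl|Dg(Jv_k)-J\,Dg(v_k)\bigr|^2\ \ge\ 0,
\]
with equality iff $Dg$ commutes with $J$, i.e.\ iff $g$ is holomorphic. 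Take a competitor $h\in W^{1,2}(B_1,\Iq(\R^{2\nu}))$ with $h|_{\partial B_1}=f|_{\partial B_1}$. Realising $\graph(f)$ and $\graph(h)$ as integer rectifiable currents $T_f,T_h\subset B_1\times\C^\nu$ with $\partial T_f=\partial T_h$, the closedness of $\alpha$ gives the Stokes identity
\[
\sum_i\int_{B_1}N(Dh_i)\,dx\ =\ \langle T_h,\alpha\rangle\ =\ \langle T_f,\alpha\rangle\ =\ \sum_i\int_{B_1}N(Df_i)\,dx.
\]
Combining the pointwise bound on the $h$-side with its sheetwise saturation on the $f$-side,
\[
\D(h)=\sum_i\int|Dh_i|^2\,dx\ \ge\ 2\sum_i\int N(Dh_i)\,dx\ =\ 2\sum_i\int N(Df_i)\,dx\ =\ \sum_i\int|Df_i|^2\,dx\ =\ \D(f).
\]

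\textbf{Main obstacle.} The heart of the proof is the pointwise identity of Step 3, which packages Wirtinger's inequality at the level of real Jacobians --- a clean but not automatic K\"ahler/Grassmannian computation --- together with the Stokes identity $\langle T_h-T_f,\alpha\rangle=0$. The latter, for merely $W^{1,2}$ $Q$-valued maps, requires either a Lipschitz approximation of Sobolev $Q$-functions or Almgren's graph-mass machinery to justify the identification $\sum_i\int N(Dh_i)\,dx=\langle T_h,\alpha\rangle$ and to produce a filling $(2\mu+1)$-current between $T_f$ and $T_h$ on which the vanishing $d\alpha=0$ can be integrated by parts.
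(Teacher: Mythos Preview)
Your proof is correct in outline, and the technical step you flag (Stokes for $Q$-valued Sobolev graphs) can be filled by restricting to Lipschitz competitors via the density result \cite[Section~14]{DLSp} and invoking the boundary identity of Theorem~\ref{t:de Tf} at a good radius, exactly as the paper does in its planar argument. The route, however, is genuinely different from the paper's. The paper deduces $\D$-minimality from \emph{mass}-minimality of $\a{\mathscr V}$ (Federer's Theorem~\ref{t:fed}): for $\mu=1$ via the conformal identity $\mass(T_f)=Q+\tfrac12\D(f)$ together with the general inequality $\mass(T_h)\le Q+\tfrac12\D(h)$; for $\mu\ge1$ by the rescaling $L_\lambda(z,w)=(z,\lambda w)$ --- the key observation being that $L_{\lambda\#}\a{\mathscr V}$ is still a complex variety, hence still mass-minimizing --- combined with the expansion $\mass(T_{\lambda f})=Q|\Omega|+\tfrac{\lambda^2}{2}\D(f)+o(\lambda^2)$ of Lemma~\ref{l:mass vis dir} and a contradiction argument closed off by the isoperimetric inequality. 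You instead calibrate the Dirichlet energy \emph{directly} with the closed form $\alpha=\omega_b^{\mu-1}\wedge\omega_f/(\mu-1)!$: your pointwise identity $|Dg|^2-2N(Dg)=\sum_k|Dg(Jv_k)-JDg(v_k)|^2$ is an energy-level Wirtinger inequality, saturated precisely on holomorphic sheets, and Stokes turns $h\mapsto\sum_i\int N(Dh_i)$ into a null Lagrangian depending only on the trace. Your argument bypasses Federer's theorem and the rescaling trick entirely and isolates a calibration tailored to the Dirichlet integral --- potentially portable to other K\"ahler or calibrated settings; the paper's argument stays within standard GMT and makes transparent the passage from mass- to Dirichlet-minimality via the $\lambda\to0$ limit.
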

\begin{theorem}\label{t:higher}
There exists $p=p(n,m,Q)>2$ such that, for every $\Omega\subseteq\R^m$ open and
$u\in W^{1,2}(\Omega,\Iqs)$ $\D$-minimizing, $|Du|\in L_{loc}^p(\Omega)$.
\end{theorem}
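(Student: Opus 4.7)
The strategy is the classical one: establish a reverse H\"older inequality for $|Du|$ on concentric balls and conclude by Gehring's lemma. The two main technical ingredients are a Caccioppoli-type inequality for $\D$-minimizers and a Sobolev--Poincar\'e inequality for $Q$-valued Sobolev maps.

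The first step is the \emph{Caccioppoli inequality}: for any $B_{2r}(x)\subset\Omega$,
\[
\int_{B_r(x)} |Du|^2 \,\leq\, \frac{C}{r^2}\int_{B_{2r}(x)\setminus B_r(x)} \cG\bigl(u, Q\a{\bar y}\bigr)^2,
\]
where $\bar y\in\R^n$ is suitably chosen (for instance the average on the annulus of the center of mass $\eta\circ u$). This should follow from the minimality of $u$ by comparing it with a competitor obtained by interpolating on a thin annulus between the trace $u|_{\partial B_r}$ and the constant $Q$-valued map $Q\a{\bar y}$. The key tool is a Luckhaus-type $Q$-valued extension lemma, as developed in \cite{DLSp}, which controls the Dirichlet energy of the interpolation by the $L^2$-oscillation of the trace.

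The second step is the \emph{Sobolev--Poincar\'e inequality}: for $\frac{1}{q}=\frac{1}{2}+\frac{1}{m}$, there exists a constant $Q$-valued map $Q\a{\bar y}$ such that
\[
\left(\frac{1}{|B_{2r}|}\int_{B_{2r}(x)}\cG(u,Q\a{\bar y})^2\right)^{\!1/2} \leq C\,r\left(\frac{1}{|B_{2r}|}\int_{B_{2r}(x)}|Du|^q\right)^{\!1/q}.
\]
This is obtained by composing $u$ with Almgren's biLipschitz embedding $\xii:\Iqs\to\R^N$, applying the standard Sobolev--Poincar\'e inequality to $\xii\circ u$, and then transferring the estimate back through the Lipschitz retraction $\ro:\R^N\to\xii(\Iqs)$.

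Combining the two estimates on $B_r(x)\subset B_{2r}(x)$ yields the reverse H\"older inequality
\[
\left(\frac{1}{|B_r|}\int_{B_r(x)} |Du|^2 \right)^{\!1/2} \leq C\left(\frac{1}{|B_{2r}|}\int_{B_{2r}(x)} |Du|^q\right)^{\!1/q},
\]
and the conclusion $|Du|\in L_{loc}^p(\Omega)$ for some $p>2$ then follows from the Gehring--Giaquinta--Modica lemma. I expect the main difficulty to lie in the Caccioppoli step: unlike the single-valued case, one cannot multiply $u$ by a cutoff, and so the competitor must be produced by a genuine $Q$-valued extension argument on a thin annulus with sharp energy control in terms of the boundary trace---precisely the role played by the $Q$-valued interpolation lemma invoked above.
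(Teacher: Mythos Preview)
Your overall strategy---Caccioppoli plus reverse H\"older plus Gehring---is the right one, but the Sobolev--Poincar\'e step as you state it is false. The inequality
\[
\left(\fint_{B_{2r}}\cG\bigl(u,Q\a{\bar y}\bigr)^2\right)^{1/2}\leq C\,r\left(\fint_{B_{2r}}|Du|^q\right)^{1/q}
\]
cannot hold with a diagonal point $Q\a{\bar y}$: take $Q=2$ and $u\equiv \a{0}+\a{e}$ for some $e\neq 0$; then $|Du|\equiv 0$, while $\cG(u,2\a{\bar y})\geq |e|/\sqrt 2$ for every $\bar y$. The embedding/retraction argument you describe only produces a general $T\in\Iq$, not a point of the form $Q\a{\bar y}$, so it does not prove what you claim. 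This mismatch is fatal, because the Caccioppoli inequality you have (whether via outer variation or via interpolation to $Q\a{\bar y}$) controls $|Du|$ only in terms of $\cG(u,Q\a{\bar y})$, not $\cG(u,T)$ for an arbitrary $T$.

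This is precisely the obstruction the paper's argument is built to circumvent, and it does so differently on both steps. First, the Caccioppoli inequality \eqref{e:caccioppoli} comes in one line from the outer variation formula with $\psi(x,y)=\eta(x)^2(y-P)$; no interpolation lemma is needed. Second, rather than a $Q$-valued Sobolev--Poincar\'e, the paper subtracts the harmonic average $\ph=\etaa\circ u$ (handled by classical estimates) and proves the reverse H\"older for the zero-average part $v=\sum_i\a{u_i-\ph}$ by a compactness--contradiction argument with induction on $Q$: if the estimate fails along a normalized sequence $v_l$, the limit is a constant $T$ with $\etaa(T)=0$ and $|T|=1$, hence $T\neq Q\a{0}$, which forces the $v_l$ to split into lower-multiplicity $\D$-minimizers to which the inductive hypothesis applies. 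Your route can be rescued if you establish a Caccioppoli inequality relative to an \emph{arbitrary} $T\in\Iq$---the Luckhaus interpolation does yield this, at the price of an extra term $\eps\fint_{B_{2r}}|Du|^2$ which the full Giaquinta--Modica lemma absorbs---but as written the two halves of your argument do not fit together.
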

Theorem \ref{t:complex} provides many examples of $\D$-minimizing functions and,
in particular, shows that the H\"older continuity and the estimate of the singular set
of a $\D$-minimizer proved in \cite{Alm} and \cite{DLSp} are optimal results.
Theorem \ref{t:complex} has been proved by Almgren in his big regularity paper
\cite[Theorem 2.20]{Alm} using a deep and complicated approximation theorem
of minimal currents via graphs of Lipschitz $Q$-functions
(see also \cite{DLSp2}).
Here we give a more elementary proof avoiding the approximation result by Almgren.
Moreover, for the planar case we also provide an alternative argument which exploits
the equality between the area and the energy of conformal maps.
We hope that this approach can be extended to the study of
regularity issues for more complicated calibrated geometries.

Theorem \ref{t:higher} has been first proved by
De Lellis and the author in \cite{DLSp2} in connection with a new
higher integrability estimate for minimal currents and it plays a crucial role in
the proof of Almgren's approximation theorem given there.
Here, we propose a different ``intrinsic'' proof, where ``intrinsic'' means
based only on the metric theory of $Q$-valued functions as developed in \cite{DLSp}.
In case $m=2$, we can exploit the fact that $\D$-minimizing functions have isolated singularities
(proven in \cite{DLSp}) to find the optimal integrability. The optimality is indeed
shown by the examples provided by complex varieties in the first part of the paper.

\medskip

The paper is organized as follows. In Section \ref{s:Qfunc} we collect some basic results
and definitions on $Q$-valued functions and the rectifiable currents supported by their graphs.
In Section \ref{s:complex} we identify complex varieties as graphs of
Sobolev $Q$-valued functions and prove Theorem \ref{t:complex}.
Finally, Section \ref{s:higher} contains the proof of Theorem \ref{t:higher} which passes through
a Caccioppoli and a reverse H\"older inequality for $\D$-minimizing functions.

\subsection{Acknowledgements} The author is grateful to Camillo De Lellis
for many stimulating discussions. This research has been supported by the
Forschungskredit of the University of Zurich.

\section{$Q$-valued functions}\label{s:Qfunc}
In what follows, we adopt the notation and the approach introduced in \cite{DLSp},
which differs from Almgren's original one.
For the definitions of the metric space of $Q$-points $(\Iq,\cG)$,
Sobolev $Q$-valued function and Dirichlet energy, we refer to \cite{DLSp}.
We say that a function $f:\Omega\subset\R^{m}\to\Iqs$ has a smooth \textit{local selection}
in $\Omega'\subseteq\Omega$ if, for every $x\in\Omega'$,
there exist $r>0$ and $f_i:B_r(x)\to\R^{n}$ smooth functions
such that $f\vert_{B_r(x)}=\sum_{i=1}^Q\a{f_i}$.
Note that, in this case, $|Df|^2=\sum_i|Df_i|^2$ is well defined on the whole $\Omega'$.
We observe the following simple consequence of the definition, which for reader's
convenience we state as a lemma.
\begin{lemma}\label{l:Sob}
Let $f:\Omega\subset\R^{m}\to\Iq$ have a smooth local selection in
$\Omega'\subseteq\Omega$.
If $\dim_{\cH}(\Omega\setminus \Omega')\leq m-2$ and
$\int_{\Omega'}|Df|^2<+\infty$,
then $f$ belongs to $W^{1,2}(\Omega,\Iq)$.
\end{lemma}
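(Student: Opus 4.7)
The plan is to reduce to the classical removable-singularity theorem for vector-valued Sobolev functions via the Almgren bi-Lipschitz embedding. Let $\xi:\Iq\hookrightarrow\R^N$ be that embedding (as in \cite{DLSp}); by the characterization proved therein, $f\in W^{1,2}(\Omega,\Iq)$ iff $u:=\xi\circ f\in W^{1,2}(\Omega,\R^N)$ with values in $\xi(\Iq)$. Since the property of having a smooth local selection is open in $x$, I may assume $\Omega'$ is open, so $K:=\Omega\setminus\Omega'$ is relatively closed in $\Omega$, and $\cH^{m-1}(K)=0$ follows from $\dim_\cH K\le m-2$. On $\Omega'$, the selections $f=\sum_{i=1}^Q\a{f_i}$ give at once that $u$ is locally Lipschitz with $|Du|\le\Lip(\xi)|Df|$ a.e., hence $|Du|\in L^2(\Omega')$ by hypothesis.

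Next, I invoke the classical removability result: if $K\subset\Omega$ is relatively closed with $\cH^{m-1}(K)=0$, every $v\in W^{1,2}_{loc}(\Omega\setminus K)$ with $|Dv|\in L^2(\Omega\setminus K)$ extends to an element of $W^{1,2}(\Omega)$ whose distributional gradient equals the classical one a.e.\ outside $K$. Applied to $u$, combined with local Poincaré inequalities on $B\cap\Omega'$ for balls $B\subset\subset\Omega$ (valid because $K$ has zero $W^{1,2}$-capacity, in particular does not obstruct the standard Poincaré constant) in order to upgrade the $L^2$ gradient bound to a local $L^2$ bound on $u$ itself, this yields $u\in W^{1,2}(\Omega,\R^N)$; unraveling the embedding then gives $f\in W^{1,2}(\Omega,\Iq)$.

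The crux, and the only point requiring genuine work, is the removability theorem. Its proof produces cutoffs $\eta_k\in C^\infty_c(\Omega\setminus K)$ with $\eta_k\to 1$ a.e.\ and $\int|\nabla\eta_k|^2\to 0$, by pasting logarithmic-type cutoffs over Vitali covers of $K$ by balls of arbitrarily small $(m-2+\varepsilon)$-content (possible since $\dim_\cH K\le m-2$). Such $\eta_k$ render $K$ invisible to integration by parts: testing against $\eta_k\psi$ for $\psi\in C^\infty_c(\Omega)$ and letting $k\to\infty$ identifies the distributional gradient of $u$ on $\Omega$ with the pointwise gradient on $\Omega'$ extended by $0$ across $K$, completing the argument.
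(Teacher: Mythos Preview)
Your high-level strategy is essentially the paper's: reduce the $Q$-valued statement to classical removability for $W^{1,2}$ across a relatively closed set $K$ with $\cH^{m-1}(K)=0$. The paper does this directly on the scalar functions $x\mapsto\cG(f(x),T)$ and invokes the ACL (slice) characterization of $W^{1,2}$; you instead compose with the bi-Lipschitz embedding $\xi$ and cite removability for $\R^N$-valued maps. Either reduction is fine.

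The gap is in your justification of the removability step. You assert that $K$ has zero $W^{1,2}$-capacity and build cutoffs with $\int|\nabla\eta_k|^2\to 0$. This does not follow from $\dim_\cH K\le m-2$: that hypothesis gives $\cH^{m-2+\eps}(K)=0$ for every $\eps>0$, but not $\cH^{m-2}(K)<\infty$, which is the standard sufficient condition for vanishing $2$-capacity. Covering $K$ by balls of small $(m-2+\eps)$-content controls $\sum r_i^{m-2+\eps}$, not $\sum r_i^{m-2}$, so the $L^2$ norm of the gradient of the pasted cutoff need not go to zero; indeed there exist compact sets of Hausdorff dimension exactly $m-2$ with positive $2$-capacity (e.g.\ in $\R^2$, suitable Cantor sets of dimension $0$ have positive logarithmic capacity). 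The same unjustified capacity claim underlies your Poincar\'e step. The correct argument---and the one the paper actually gives---bypasses capacity: since orthogonal projection is $1$-Lipschitz, the projection of $K$ onto each coordinate hyperplane is $\cH^{m-1}$-null, hence a.e.\ line parallel to an axis misses $K$ entirely; on such lines the function is smooth with $L^2$ derivative, and the ACL characterization of $W^{1,2}$ (Evans--Gariepy) concludes.
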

\begin{proof}
The proof follows from the characterization
of classical Sobolev functions via the slice property.
Indeed, for every $T\in \Iq$, the function $x\mapsto \cG(f(x),T)$ is
smooth and satisfies
$|D(\cG(f(\cdot),T))|\leq |Df|$ in $\Omega'$ (cp.~to \cite[Proposition 2.17]{DLSp}).
Therefore, since the projection of $\Omega\setminus\Omega'$ on each coordinate
hyperplane is a set of $\cH^{m-1}$ measure zero, for
$\cH^{m-1}$-a.e.~line $l$ parallel to the axes,
the restriction of $\cG(f(\cdot),T)$ to $l$ belongs to $W^{1,2}$.
Recalling \cite[Section 4.9.2]{EG},
it follows that $\cG(f(\cdot),T)\in W^{1,2}(\Omega)$ with
$|D(\cG(f(\cdot),T))|\leq |Df|$ a.e.~in $\Omega$.
Hence, by the definition of Sobolev $Q$-functions \cite[Definition 0.5]{DLSp}, we conclude.
\end{proof}

We will need also a technical result about the lower semicontinuity of the $L^p$ norm of
the gradient under weak convergence.
Although this is a special case of the result in \cite{DLFS}, we include here
an elementary proof for the sake of completeness.

\begin{lemma}[Semicontinuity]\label{l:sc}
Let $f_k$, $f\in W^{1,p}(\Omega,\Iq)$, $p<\infty$, be such that
$\lim_{k}\int_\Omega\cG(f_k,f)^p=0$ and
$\sup_k\int_\Omega |Df_k|^p<\infty$. Then,
\begin{equation}\label{e:sc}
\int_\Omega|Df|^p\leq\liminf_{k\to+\infty}\int_\Omega|Df_k|^p.
\end{equation}
\end{lemma}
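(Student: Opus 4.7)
My plan is to reduce the statement to the classical lower semicontinuity for scalar Sobolev functions, using the metric characterization of $|Df|$ from \cite{DLSp}. First I would fix a countable dense subset $\{T_j\}_{j\in\N}$ of $\Iq$ and set $g_j^k:=\cG(f_k,T_j)$, $g_j:=\cG(f,T_j)$. The triangle inequality for $\cG$ gives $|g_j^k-g_j|\leq\cG(f_k,f)$, so $g_j^k\to g_j$ in $L^p(\Omega)$, while \cite[Proposition 2.17]{DLSp} (cited already in the proof of Lemma~\ref{l:Sob}) yields $|Dg_j^k|\leq|Df_k|$ and $|Dg_j|\leq|Df|$ almost everywhere. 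The crucial input from the metric Sobolev framework is the identification $|Df|=\sup_j|Dg_j|$ a.e., which will recover the left-hand side of \eqref{e:sc} from the scalar inequalities.

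For the case $p>1$, each sequence $\{g_j^k\}_k$ is bounded in $W^{1,p}(\Omega)$ and converges to $g_j$ in $L^p$, so weak $L^p$-compactness and a diagonal extraction produce a single (not relabeled) subsequence along which $Dg_j^k\weak Dg_j$ in $L^p(\Omega,\R^m)$ for every $j\in\N$ simultaneously.

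The step I expect to be the most delicate is assembling these scalar semicontinuity statements into a vector bound involving first the maximum and then the supremum over $j$. For any finite measurable partition $\{A_j\}_{j=1}^N$ of $\Omega$, weak lower semicontinuity of the $L^p$ norm on each $A_j$, superadditivity of $\liminf$, and the pointwise inequality $|Dg_j^k|\leq|Df_k|$ give
\begin{equation*}
\sum_{j=1}^N\int_{A_j}|Dg_j|^p\,\leq\,\liminf_{k\to+\infty}\sum_{j=1}^N\int_{A_j}|Dg_j^k|^p\,\leq\,\liminf_{k\to+\infty}\int_\Omega|Df_k|^p.
\end{equation*}
Choosing $A_j$ to be (essentially) the set where $|Dg_j|$ attains $\max_{i\leq N}|Dg_i|$ turns the left-hand side into $\int_\Omega\max_{j\leq N}|Dg_j|^p$; letting $N\to\infty$ and invoking monotone convergence together with $|Df|=\sup_j|Dg_j|$ concludes the proof, and a standard subsequence-of-subsequence argument removes the reliance on passing to a subsequence. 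The endpoint $p=1$ requires substituting weak $L^p$-compactness with the lower semicontinuity of the total variation under $L^1$ convergence, exploiting the hypothesis $f\in W^{1,1}$ to identify the total variation of $g_j$ with $\int|Dg_j|$.
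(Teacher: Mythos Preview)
Your reduction to scalar Sobolev functions is the right idea, and it is exactly the strategy of the paper; the gap is in the identification $|Df|=\sup_j|Dg_j|$, which is \emph{not} the characterization proved in \cite[Proposition~4.2]{DLSp}. That proposition gives
\[
|Df|^2(x)\;=\;\sum_{j=1}^m\,\sup_{l}\bigl(\partial_j\cG(f,T_l)\bigr)^2(x),
\]
i.e.\ the supremum is taken \emph{separately in each coordinate direction}, whereas your $\sup_j|Dg_j|^2=\sup_l\sum_j\bigl(\partial_j\cG(f,T_l)\bigr)^2$ forces the same $T_l$ in every direction. In general $\sup_l\sum_j a_{jl}\le\sum_j\sup_l a_{jl}$ with strict inequality: already for $Q=1$, $m=n=2$ and $f(x,y)=(x,y)$ one has $|Df|^2=2$ while $|D\cG(f,T)|^2=1$ for every $T$ (the distance function to a point in $\R^2$ has gradient of Euclidean length~$1$). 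So your argument only yields $\int_\Omega(\sup_j|Dg_j|)^p\le\liminf_k\int_\Omega|Df_k|^p$, which falls short of \eqref{e:sc}.

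The fix is to run your partition argument with the correct formula: on each piece of the partition one fixes a tuple $\bar l=(l_1,\ldots,l_m)$ and uses the weak $L^p$ convergence of the \emph{vector} $\bigl(\partial_1\cG(f_k,T_{l_1}),\ldots,\partial_m\cG(f_k,T_{l_m})\bigr)$ to $\bigl(\partial_1\cG(f,T_{l_1}),\ldots,\partial_m\cG(f,T_{l_m})\bigr)$, together with lower semicontinuity of the $L^p$ norm of this vector. Taking the supremum over partitions indexed by $\bar l\in\{1,\ldots,N\}^m$ and then over $N$ recovers $\int_\Omega|Df|^p$ via monotone convergence. This is precisely the paper's proof; your diagonal extraction of a common weak-limit subsequence and the subsequence-of-subsequence reduction are fine and carry over verbatim.
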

\begin{proof}
The proof of this result is very similar to the proof of the semicontinuity for the Dirichlet
energy given in \cite[Section 2.3.2]{DLSp}.
Let $\{T_l\}_{l\in\N}$ be any dense subset of $\Iq$ and recall that
by \cite[Proposition 4.2]{DLSp}
$|Df|$ is the monotone limit of $h_N$ with
$$
h_N^2=\max_{l_j\leq N}\sum_j\bigl(\partial_j \cG (f, T_{l_j})\bigr)^2.
$$
By the Monotone Convergence Theorem, $\int |Df|^p = \sup_N \int h_{N}^p$.
Therefore, denoting by $\mathcal{P}_{N^m}$
the collections $P=\{E_{\bar l}\}_{\bar l=\{l_1,\ldots,l_m\}\in N^m}$
of $N^m$ disjoint open subsets of $\Omega$, as in \cite{DLSp} we conclude that
\begin{equation}\label{e:rappresenta}
\int_\Omega|Df|^p = \sup_N \int_\Omega h_{N}^p
=\sup_N \sup_{P\in \mathcal{P}_{N^m}}
\sum_{E_{\bar l}\in P} \int_{E_{\bar l}}
\left(\sum_j\bigl(\partial_j \cG (f, T_{l_j})\bigr)^2\right)^{\frac{p}{2}}.
\end{equation}
It follows easily from the hypotheses that,
for every $\bar l=\{l_1,\ldots,l_m\}$ and every open set $E_{\bar l}$,
the vector-valued maps $(\partial_1\cG(f_k,T_{l_1}),\ldots,\partial_m\cG(f_k,T_{l_m}))$ converge weakly in $L^p(E_{\bar l})$ to
$(\partial_1\cG(f,T_{l_1}),\ldots,\partial_m\cG(f,T_{l_m}))$.
Hence, by the semicontinuity of the norm,
\begin{equation*}\label{e:semi2}
\int_{E_{\bar l}}
\left(\sum_j\bigl(\partial_j \cG (f, T_{l_j})\bigr)^2\right)^{\frac{p}{2}}
\leq \liminf_{k\to+\infty}
\int_{E_{\bar l}}
\left(\sum_j\bigl(\partial_j \cG (f_k, T_{l_j})\bigr)^2\right)^{\frac{p}{2}}.
\end{equation*}
Summing in 
$E_l\in P$,
in view of \eqref{e:rappresenta}, we achieve \eqref{e:sc}.
\end{proof}

The main regularity results for $\D$-minimizing $Q$-valued functions
are collected in the following theorem
(see \cite[Theorems 0.9 and 0.11]{DLSp}).
In order to state them, we recall the definition of regular and singular points.
\begin{definition}\label{d:regular}
A $Q$-valued function $f$ is regular at a point $x\in \Omega$ if there exist
a neighborhood $U$ of $x$ and $Q$ analytic functions $f_i:U\to \R^{n}$ such that
$f \vert_U= \sum_i \a{f_i}$ and either $f_i (y)\neq f_j (y)$ for every $y\in U$ or
$f_i\equiv f_j$.
The singular set $\Sigma_f$ of $f$ is the complement of the set of regular points.
\end{definition}

\begin{theorem}\label{t:regularity}
For every $\D$-minimizing $f\in W^{1,2} (\Omega,\Iq)$ the following holds: 
\begin{itemize}
\item[$(i)$] there exists $\alpha=\alpha (m,Q)>0$ ($\alpha (2, Q)=1/Q$) such that
$f\in C^{0,\alpha} (\Omega')$ for every $\Omega'\subset\subset\Omega$ and
\begin{equation}\label{e:dec}
\hspace{1.5cm}\D(f,B_r(x))\leq\left(\frac{r}{\rho}\right)^{2\alpha}\D(f,B_\rho(x)),
\quad\forall\;r\leq \rho\;\text{with}\; B_\rho(x)\subseteq\Omega;
\end{equation}
\item[$(ii)$] the Hausdorff dimension of $\Sigma_f$ is at most $m-2$ and,
if $m=2$, $\Sigma_f$ consists of isolated points.
\end{itemize}
\end{theorem}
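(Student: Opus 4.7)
The plan is to follow the frequency-function approach pioneered by Almgren and refined in \cite{DLSp}. For part $(i)$, I would first establish the Euler–Lagrange identities for $\D$-minimizers via outer and inner variations, exploiting the local selection property away from the singular set (which has Hausdorff codimension at least $2$). Combining the two identities on concentric balls $B_r(x_0)$ with appropriate radial cut-off vector fields yields the monotonicity of the frequency function
$$
I_{x_0}(r) \;=\; \frac{r\, \D(f, B_r(x_0))}{\int_{\partial B_r(x_0)} \cG(f, Q\a{0})^2}.
$$

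Given monotonicity, the energy decay \eqref{e:dec} would follow by a compactness–contradiction scheme. Assuming \eqref{e:dec} fails for some $\alpha_0>0$, one extracts rescalings $f_{x_k,r_k}(y) := f(x_k + r_k y)/r_k^{\alpha_0}$, normalizes them, and passes to a limit which, by Lemma \ref{l:sc} and the $\D$-minimality, is a nontrivial $\alpha_0$-homogeneous $\D$-minimizer. A further compactness argument on the set of possible blow-ups produces a uniform positive lower bound $\alpha(m,Q) > 0$ on the attained frequency values, yielding \eqref{e:dec}. H\"older continuity then follows from Campanato's criterion. In the planar case, the stronger statement $\alpha(2,Q) = 1/Q$ would come from the explicit ODE classification of one-dimensional $\D$-minimizers on circles, which shows that the admissible frequencies belong to $\{k/Q : k \in \N\}$, with minimal positive value $1/Q$.

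For part $(ii)$, the bound $\dim_{\cH}(\Sigma_f) \leq m-2$ would be obtained through Federer's dimension reduction applied to homogeneous tangent maps: at a singular point, any blow-up is a homogeneous $\D$-minimizer with a singularity at the origin; if $\dim_{\cH}(\Sigma_f)$ were to exceed $m-2$, iteration of the blow-up procedure would produce a tangent map invariant under $m-1$ linearly independent translations, whose trace on a transversal line is a singular one-dimensional $\D$-minimizer, contradicting the explicit planar classification. For $m=2$, singular points are already zero-dimensional; isolation would follow from the uniqueness of tangent maps (forced by monotonicity and the discrete spectrum $\{k/Q\}$) combined with the fact that frequency strictly below the singular threshold forces regularity, so that no accumulation of singularities can occur.

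The main technical difficulty lies in establishing frequency monotonicity and extracting $\D$-minimizing tangent maps: the nonlinear metric structure of $\Iq$ forbids linear manipulations, variations must be constructed through local selections and glued across the negligible singular set, and Lemma \ref{l:sc} is essential to pass to the limit in the minimality property along the blow-up sequence. In dimension $2$, the sharpness $\alpha(2,Q) = 1/Q$ and the isolation of the singular set both hinge on a careful Fourier analysis of $Q$-valued maps on circles, which is the most delicate technical step of the whole argument.
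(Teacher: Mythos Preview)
The paper does not prove Theorem~\ref{t:regularity}: it is quoted from \cite[Theorems~0.9 and~0.11]{DLSp} as background material, with no argument given here. So there is no ``paper's own proof'' to compare against; your sketch is effectively an outline of the proof in \cite{DLSp}.

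That said, your outline is broadly faithful to the strategy of \cite{DLSp}: inner/outer variations leading to monotonicity of Almgren's frequency, blow-up analysis and homogeneous tangent maps, Federer reduction for the singular set, and the explicit classification on $\mathbb{S}^1$ for $m=2$. A couple of points where your account drifts from that reference: the energy decay \eqref{e:dec} is not obtained in \cite{DLSp} by a compactness--contradiction argument on rescalings, but rather flows directly from the differential identities defining the frequency---once one knows $I_{x_0}(r)\geq \alpha$, the decay of $r\mapsto r^{-m-2\alpha+2}\D(f,B_r(x_0))$ follows by integrating an ODE inequality. Also, you invoke the local selection ``away from the singular set'' to derive the variational identities, which is circular: regularity of $\Sigma_f$ is exactly what you are trying to prove. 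In \cite{DLSp} the outer and inner variations are carried out using the Lipschitz approximation and the approximate differentiability of Sobolev $Q$-functions, without any a priori knowledge of $\Sigma_f$. These are refinements rather than gaps, but they matter if you want to turn the sketch into an actual proof.
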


\subsection{Push-forward of currents under $Q$-functions}\label{s:current}
We define now the integer rectifiable current associated to the graph of a $Q$-valued function.

Given a $Q$-valued function $f:\R^{m}\to\Iqs$, we set $\bar f=\sum_i\a{(x,f_i(x))}$,
$\bar f:\R^{m}\to\Iq(\R^{m+n})$.
If $R\in \sD_k(\R^m)$ is a rectifiable current associated to a $k$-rectifiable set $M$
with multiplicity $\theta$,
$R=\tau(M,\theta,\xi)$, where $\xi$ is a borel simple $k$-vector field orienting $M$
(we use the notation in \cite{Sim}),
and if $f$ is a proper Lipschitz $Q$-valued function,
we can define the push-forward of $T$ under $f$ as follows.
\begin{definition}\label{d:TR}
Given $R=\tau(M,\theta,\xi)\in \sD_k(\R^m)$ and $f\in\Lip(\R^{m},\Iqs)$ as above,
we denote by $T_{f,R}$ the current in $\R^{m+n}$ defined by
\begin{equation}\label{e:TR}
\la T_{f, R}, \omega\ra = \int_{M} \theta \,\sum_i\la
\omega\circ\bar f_i, D^M\bar f_{i\#}\xi\ra d\,\cH^k
\quad\forall\;\omega\in\sD^k(\R^{m+n}),
\end{equation}
where $\sum_i \a{D^M\bar f_i(x)}$ is the differential of $\bar f$ restricted to $M$.
\end{definition}
\begin{remark}
Note that, by Rademacher's theorem \cite[Theorem 1.13]{DLSp}
the derivative of a Lipschitz $Q$-function is defined a.e.~on smooth manifolds and, hence,
also on rectifiable sets.
\end{remark}

As a simple consequence of the Lipschitz decomposition
in \cite[Proposition 1.6]{DLSp},
there exist $\{E_j\}_{j\in\N}$ closed subsets of $\Omega$,
positive integers $k_{j,l},\,L_j\in\N$ and Lipschitz functions $f_{j,l}:E_j\to\R^{n}$, for $l=1,\ldots,L_j$, such that
\begin{equation}\label{e:Lip decomp}
\cH^{k}(M\setminus \cup_{j} E_j)=0\quad\text{and}\quad
f\vert_{E_j}=\sum_{l=1}^{L_j}k_{j,l}\,\a{f_{j,l}}.
\end{equation}
From the definition, $T_{f,R}=\sum_{j,l}k_{j,l}\bar f_{j,l\#}(R\res E_{j})$
is a sum of rectifiable currents defined by the push-forward under single-valued Lipschitz functions.
Therefore, it follows that $T_{f,R}$ is rectifiable and coincides with
$\tau\big(\bar f(M),\theta_{f}, \vec{T}_{f}\big)$,
where
\[
\theta_{f}(x,f_{j,l}(x))=k_{j,l}\theta(x)\quad \text{and}\quad
\vec{T}_{f}(x,f_{j,l}(x))=\frac{D^M\bar f_{j,l\#}\xi(x)}{|D^M\bar f_{j,l\#}\xi(x)|}
\quad\forall\;x\in E_j.
\]
By the standard area formula, using the above decomposition of $T_{f,R}$,
we get an explicit expression for the mass of $T_{f,R}$:
\begin{equation}\label{e:mass TR}
\mass\left(T_{f, R}\right)=\int_{M}|\theta|\,
\sum_i\sqrt{\det\left(D^M\bar f_i\cdot (D^M\bar f_i)^T\right)}\,d\,\cH^k.
\end{equation}

With a slight abuse of notation, when $R=\a{\Omega}\in \sD_m(\R^m)$
is given by the integration over a Lipschitz domain $\Omega\subset\R^m$ of the standard
$m$-vector $\vec e=e_1\wedge\cdots\wedge e_m$, we write simply $T_{f,\Omega}$ for $T_{f,R}$.
The same we do for $T_{f,\de \Omega}$, understanding that $\de\Omega$ is oriented as the
boundary of $\a{\Omega}$.
The main result for what concerns the push-forward under $Q$-valued functions
is given in the following theorem proven in \cite[Theorem C.3]{DLSp2}.

\begin{theorem}\label{t:de Tf}
For every $\Omega$ Lipschitz domain and $f\in \Lip(\Omega,\Iq)$,
$\de \,T_{f,\Omega}=T_{f,\de \Omega}$.
\end{theorem}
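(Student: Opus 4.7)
The plan is to test the claimed identity against an arbitrary $\omega\in\sD^{m-1}(\R^{m+n})$ and reduce, through the Lipschitz decomposition \eqref{e:Lip decomp}, to the classical boundary formula for single-valued Lipschitz pushforwards. Using \eqref{e:TR} together with the decomposition, and writing $R=\a{\Omega}$ with $\xi=e_1\wedge\cdots\wedge e_m$, one obtains
\[
\langle T_{f,\Omega}, d\omega\rangle = \sum_{j,l} k_{j,l} \int_{E_j}d\bigl((\bar f_{j,l})^*\omega\bigr),
\]
where I have used that the pullback-commutes-with-exterior-derivative identity $(\bar f_{j,l})^*(d\omega)=d((\bar f_{j,l})^*\omega)$ holds a.e.\ on $E_j$, by Rademacher applied to the Lipschitz map $\bar f_{j,l}$.

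For each $(j,l)$ I would extend $f_{j,l}$ by Kirszbraun to a Lipschitz map $\tilde f_{j,l}:\bar\Omega\to\R^n$ and invoke the classical single-valued identity $\partial((\bar{\tilde f}_{j,l})_\#\a{\Omega})=(\bar{\tilde f}_{j,l})_\#\a{\partial\Omega}$. The summand of interest, $(\bar f_{j,l})_\#(\a{\Omega}\res E_j)$, differs from $(\bar{\tilde f}_{j,l})_\#\a{\Omega}$ by a current supported over $\Omega\setminus E_j$; taking the boundary of the difference produces an outer contribution on $\partial\Omega\cap E_j$ together with an internal contribution along $\Omega\cap\partial E_j$, and the whole task reduces to collecting and summing these over all $(j,l)$.

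The crux is the cancellation of the internal contributions. At any interface point $x\in\partial E_j\cap\partial E_{j'}$, continuity of $f$ forces
\[
\sum_{l} k_{j,l}\a{f_{j,l}(x)}=\sum_{l'} k_{j',l'}\a{f_{j',l'}(x)}
\]
as elements of $\Iqs$, yielding a pointwise bijection between the branches on the two sides with matching multiplicities. Because the inward conormals of $E_j$ and $E_{j'}$ point in opposite directions along their common interface, the corresponding internal boundary pieces of $(\bar f_{j,l})_\#(\a{\Omega}\res E_j)$ and $(\bar f_{j',l'})_\#(\a{\Omega}\res E_{j'})$ pair up with opposite signs and cancel, leaving only the outer terms, which by construction reassemble into $T_{f,\partial\Omega}$.

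The principal obstacle is that the $E_j$ appearing in \eqref{e:Lip decomp} are only closed and measurable, with no a priori control on the regularity of $\partial E_j$; making ``Stokes on $\Omega\setminus E_j$'' and the orientation of $\partial E_j\cap\partial E_{j'}$ rigorous is therefore delicate. To bypass this I would first establish the identity on a dense subclass of Lipschitz $Q$-functions where the decomposition is genuinely combinatorial, for example piecewise affine $Q$-functions subordinate to a simplicial triangulation of $\Omega$; there each branch $f_{j,l}$ is globally affine on a simplex, the interfaces are oriented $(m-1)$-cells with canonical opposite orientations on the two sides, and the internal cancellation is an instance of $\partial\circ\partial=0$ on the underlying simplicial chain. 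The general case would then follow by approximating an arbitrary $f\in\Lip(\Omega,\Iq)$ uniformly by such piecewise-affine $f_k$ with uniformly bounded Lipschitz constants and passing to the limit in the mass formula \eqref{e:mass TR} via dominated convergence, which transfers the boundary identity from $f_k$ to $f$.
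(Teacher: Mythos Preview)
The paper does not actually prove Theorem~\ref{t:de Tf}; it is quoted from \cite[Theorem~C.3]{DLSp2}, so there is no in-paper argument to compare your proposal against. I will therefore assess your sketch on its own terms.

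Your overall architecture---reduce to single-valued Lipschitz pushforwards via the decomposition \eqref{e:Lip decomp}, verify the identity on a combinatorial subclass where interfaces are genuine oriented cells, and then pass to the limit---is a reasonable plan, and the piecewise-affine case does go through exactly as you say: on each simplex the branches are affine, continuity across a common face matches them as affine functions on that face, and the internal contributions cancel by $\partial\circ\partial=0$.

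The genuine gap is the approximation step. You assert that an arbitrary $f\in\Lip(\Omega,\Iq)$ can be uniformly approximated, with uniformly bounded Lipschitz constants, by piecewise-affine $Q$-functions subordinate to a triangulation. This is not a standard fact and is not obvious: for $m\geq 2$ a Lipschitz $Q$-valued map need not admit a Lipschitz (or even continuous) selection on a single simplex, so there is no canonical ``linear interpolation'' of the vertex values in $\Iq$. The usual route through Almgren's biLipschitz embedding $\xii:\Iq\to\R^N$ followed by simplicial approximation in $\R^N$ and the Lipschitz retraction back onto $\xii(\Iq)$ does produce Lipschitz approximants, but the retraction destroys the piecewise-affine structure, so you cannot invoke your combinatorial cancellation for the approximants produced this way. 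You would need an independent construction of piecewise-affine $Q$-valued approximants, and that is essentially as hard as the theorem itself.

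A secondary issue: even granting the approximation, ``passing to the limit in the mass formula \eqref{e:mass TR} via dominated convergence'' is not the right mechanism. Mass convergence does not transfer the boundary identity; what you need is weak (or flat) convergence of the currents $T_{f_k,\Omega}\to T_{f,\Omega}$ and $T_{f_k,\partial\Omega}\to T_{f,\partial\Omega}$, which follows from uniform convergence of the $f_k$ together with the uniform Lipschitz bound via the homotopy formula applied branch by branch. This is fixable, but should be stated correctly.
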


Up to now we have defined the push-forward under Lipschitz maps.
Nevertheless, thanks to the approximate differentiability property of Sobolev
$Q$-functions (see \cite[Corollary 2.7]{DLSp}), for full dimensional current $R=\a{\Omega}$,
the definition of $T_{f,\Omega}$ in \eqref{e:TR} makes sense for Sobolev functions
as soon as the action is finite for every differential form $\omega\in\sD^m(\R^{m+n})$.
It is easy to verify that this condition is satisfied if
\begin{equation*}\label{e:finite mass}
\mass(T_{f,\Omega})=\int_{\Omega}\sum_i\sqrt{\det\left(D^M\bar f_i\cdot (D^M\bar f_i)^T\right)}<+\infty.
\end{equation*}
For such functions, we have the following Taylor expansion of the mass of $T_{f,\Omega}$.

\begin{lemma}\label{l:mass vis dir}
Let $f\in W^{1,2}(\Omega,\Iq)$ such that $\mass\left(T_{f,\Omega}\right)<+\infty$.
Then,
\begin{equation}\label{e:mass vis dir1}
\mass\left(T_{\lambda f,\Omega}\right)=Q\,|\Omega|+\frac{\lambda^2}{2}\,\D(f,\Omega)+o\left(\lambda^2\right)
\quad\text{as}\quad\lambda\to0.
\end{equation}
\end{lemma}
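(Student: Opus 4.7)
The plan is to apply the area formula to $T_{\lambda f, \Omega}$, perform a pointwise Taylor expansion of the Jacobian around $\lambda=0$, and justify the passage to the limit under the integral.

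Setting $A_i(x) := (Df_i(x))^T Df_i(x)$, the area formula stated just before the lemma gives
\begin{equation*}
\mass(T_{\lambda f, \Omega}) = \int_\Omega \sum_{i=1}^Q \sqrt{\det(I_m + \lambda^2 A_i(x))}\,dx.
\end{equation*}
Since $\det(I_m + \lambda^2 A_i)$ is a polynomial of degree $m$ in $\lambda^2$ whose coefficients are the elementary symmetric functions of the eigenvalues of $A_i$ (with the linear coefficient being $|Df_i|^2$), applying $\sqrt{1+\varepsilon} = 1 + \varepsilon/2 + O(\varepsilon^2)$ yields, at every $x$ where each $f_i$ is approximately differentiable,
\begin{equation*}
\sqrt{\det(I_m + \lambda^2 A_i)} = 1 + \tfrac{\lambda^2}{2}|Df_i|^2 + r_i(\lambda, x), \qquad r_i(\lambda, x)/\lambda^2 \to 0 \text{ as } \lambda \to 0.
\end{equation*}
Summing over $i$ and integrating, \eqref{e:mass vis dir1} reduces to showing $\int_\Omega \sum_i r_i\,dx = o(\lambda^2)$.

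For this reduction, I would split $\Omega = \Omega_\lambda^- \cup \Omega_\lambda^+$ at the scale $|Df|\sim 1/\lambda$, with $\Omega_\lambda^- := \{x:\lambda|Df_i(x)|\leq 1 \text{ for all } i\}$. On $\Omega_\lambda^-$, the explicit form of the Taylor remainder admits the pointwise bound $|r_i|/\lambda^2 \leq C_m \lambda^2 |Df_i|^4 \leq C_m |Df_i|^2$, an $L^1$-dominant because $f\in W^{1,2}$; since $r_i/\lambda^2 \to 0$ pointwise, dominated convergence yields $\int_{\Omega_\lambda^-} r_i/\lambda^2\,dx \to 0$. On $\Omega_\lambda^+$, Chebyshev's inequality combined with the absolute continuity of the integral of $|Df|^2\in L^1$ yields the quantitative decay $|\Omega_\lambda^+| \leq \lambda^2 \int_{\{|Df|>1/\lambda\}}|Df|^2\,dx = o(\lambda^2)$, which dispatches the $\tfrac{\lambda^2}{2}|Df|^2$ contribution. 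The remaining term $\int_{\Omega_\lambda^+}\sqrt{\det(I_m+\lambda^2 A_i)}\,dx$ is controlled via the monotonicity $\sqrt{\det(I_m+\lambda^2 A_i)} \leq \sqrt{\det(I_m+A_i)}$ (valid for $\lambda\leq 1$ since $A_i\geq 0$) and the hypothesis $\mass(T_{f,\Omega})<+\infty$, which places $\sqrt{\det(I_m+A_i)}$ in $L^1(\Omega)$.

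The main obstacle is precisely to upgrade the last bound from the $o(1)$ that absolute continuity of an $L^1$ function yields to the sharper $o(\lambda^2)$ required. To close this gap I would combine a refined pointwise estimate in the regime $\lambda|Df_i|>1$, such as the AM--GM bound $\sqrt{\det(I_m+\lambda^2 A_i)} \leq (1+\lambda^2|Df_i|^2/m)^{m/2} \leq C_m \lambda^m |Df_i|^m$ on $\Omega_\lambda^+$, with the quantitative measure decay $|\Omega_\lambda^+|=o(\lambda^2)$ and a level-set analysis exploiting both hypotheses in tandem; indeed both $f\in W^{1,2}$ and $\mass(T_{f,\Omega})<+\infty$ enter essentially here, and neither alone suffices.
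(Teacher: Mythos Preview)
Your overall strategy matches the paper's: express the mass via the area formula, Taylor-expand the Jacobian, and split $\Omega$ according to the size of $|Df|$. Your treatment of the good set $\Omega_\lambda^-$ is correct and essentially equivalent to the paper's (the paper uses the threshold $|Df|\le\lambda^{-1/2}$ instead of $\lambda^{-1}$, which makes the remainder bound explicit rather than appealing to dominated convergence, but this is cosmetic).

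The genuine gap is exactly where you flag it, and your proposed fix does not close it. On $\Omega_\lambda^+$ you need $\int_{\Omega_\lambda^+}\sqrt{\det(I_m+\lambda^2 A_i)}\,dx = o(\lambda^2)$, but the AM--GM bound $\sqrt{\det(I_m+\lambda^2 A_i)}\le C_m\lambda^m|Df_i|^m$ is useless for $m\ge 3$: neither $f\in W^{1,2}$ nor $\mass(T_{f,\Omega})<\infty$ controls $\int|Df_i|^m$ (the Jacobian only sees minors, which can be small even when $|Df_i|$ is large if $Df_i$ has low rank), so the ``level-set analysis exploiting both hypotheses in tandem'' has no quantity to grab onto. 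The paper's missing trick is purely algebraic: expand $\det(I_m+\lambda^2 A_i)=1+\lambda^2|Df_i|^2+\sum_{|\alpha|\ge 2}\lambda^{2|\alpha|}(M_{f_i}^\alpha)^2$ via Cauchy--Binet and use $\sqrt{a+b}\le\sqrt{a}+\sqrt{b}$ to write
\[
\sqrt{\det(I_m+\lambda^2 A_i)}\ \le\ \sqrt{1+\lambda^2|Df_i|^2}\ +\ \lambda^2\sqrt{\textstyle\sum_{|\alpha|\ge 2}(M_{f_i}^\alpha)^2}\,.
\]
The first term is $\le 1+\tfrac{\lambda^2}{2}|Df_i|^2$, handled on the bad set by $f\in W^{1,2}$ and absolute continuity. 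The second term already carries an explicit $\lambda^2$, and the remaining factor is dominated by the mass density of $T_{f,\Omega}$; its integral over the bad set is therefore $o(1)$ by $\mass(T_{f,\Omega})<\infty$ and $|\Omega_\lambda^+|\to 0$, yielding $\lambda^2\cdot o(1)=o(\lambda^2)$. That factoring of $\lambda^2$ out of the higher minors is the step your argument lacks.
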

\begin{proof}
For every $\lambda>0$, set
$A_\lambda=\big\{|Df|\leq\lambda^{-\frac{1}{2}}\big\}$
and $B_\lambda=\big\{|Df|>\lambda^{-\frac{1}{2}}\big\}$.
Since $f\in W^{1,2}(\Omega,\Iq)$, for $\lambda\to0$, we have that
\begin{equation}\label{e:su A}
\D(\lambda\,f,\Omega)=
\D(\lambda\,f,A_\lambda)+\lambda^2\int_{B_\lambda}|Df|^2
=\D(\lambda\,f,A_\lambda)+o\left(\lambda^2\right).
\end{equation}
Using the inequality $\sqrt{1+x^2}\geq 1+\frac{x^2}{2}-\frac{x^4}{4}$ for $|x|\leq 2$,
since $\lambda\,|Df|\leq\sqrt{\lambda}$ in $A_\lambda$, for $\lambda\leq 4$ we infer that
\begin{align}\label{e:est mass1}
\mass\left(T_{\lambda f,\Omega}\right)
&\geq
\sum_i\int_{\Omega}\sqrt{1+\lambda^2\,|Df_i|^2}
\geq  Q\,|B_\lambda|+\int_{A_\lambda}
\left(1+\frac{\lambda^2\,|Df|^2}{2}-C\,\lambda^4\,|Df|^4
\right)\notag\\
&\geq Q\,|\Omega|+\frac{\lambda^2}{2}\,\D(f,A_\lambda)-
\int_{A_\lambda}C\,\lambda^3\,|Df|^2\notag\\
&
\hspace{-0.15cm}\stackrel{\eqref{e:su A}}{=}
Q\,|\Omega|+\frac{\lambda^2}{2}\,\D(f,\Omega)+o\left(\lambda^2\right).
\end{align}

For what concerns the reversed inequality, we argue as follows.
In $A_\lambda$, since for every multi index $\alpha$ with $|\alpha|\geq2$ we have
\[
\lambda^{2|\alpha|}|M_{f_i}^\alpha|^2\leq C\,\lambda^{2|\alpha|}|Df_i|^{2|\alpha|}\leq
C\, \lambda^3|Df_i|^2,
\]
we use the inequality $\sqrt{1+x^2}\leq1+\frac{x^2}{2}$ and get
\begin{align}\label{e:est massA}
\mass\big(T_{\lambda f,A_\lambda}\big)
&\leq
\sum_i\int_{A_\lambda}\sqrt{1+\lambda^2\,|Df_i|^2+C\,\lambda^3\,|Df_i|^2}\notag\\
&=
Q\,|A_\lambda|+\frac{\lambda^2}{2}\,\D(f,A_\lambda)+o\left(\lambda^2\right).
\end{align}
In $B_\lambda$, instead, we use the same inequality and
the condition $\mass(T_{f,\Omega})<+\infty$ to infer
\begin{align}\label{e:est massB}
\mass\big(T_{\lambda f,B_\lambda}\big)
&\leq 
\sum_i\int_{B_\lambda}\sqrt{1+\lambda^2\,|Df_i|^2}+\sqrt{\sum_{|\alpha|\geq 2}\lambda^{2|\alpha|}{M_{f_i}^\alpha}^2}\notag\\
&\leq 
Q\,|B_\lambda|+\frac{\lambda^2}{2}\,\D(f,B_\lambda)+
\sum_i\int_{B_\lambda}\lambda^2\sqrt{\sum_{|\alpha|\geq 2}{M_{f_i}^\alpha}^2}\notag\\
&\hspace{-0.15cm}\stackrel{\eqref{e:su A}}{\leq} Q\,|B_\lambda|+o(\lambda^2)
+\lambda^2\,\mass(T_{f,B_\lambda})=Q\,|B_\lambda|+o\left(\lambda^2\right).
\end{align}
From \eqref{e:est mass1}, \eqref{e:est massA} and \eqref{e:est massB}, the proof follows.
\end{proof}

\section{Complex varieties and $\D$-minimizing functions}\label{s:complex}
\subsection{Complex varieties as minimal currents}
In the following we consider irreducible holomorphic varieties $\sV\subseteq\C^{\mu+\nu}$
of dimension $\mu$.
Following Federer \cite{Fed2}, we associate to $\sV$ the
integer rectifiable current of real dimension $2\mu$ denoted by $\a{\mathscr{V}}$
given by the integration over the manifold part of $\sV$, $\sV_{\rm{reg}}$.
Recall that the singular part $\sV_{\rm{sing}}=\sV\setminus \sV_{reg}$ is a
complex variety of dimension at most $(\mu-1)$.
A well-known result by Federer asserts that $\a{\sV}$ is a mass-minimizing cycle.

\begin{theorem}\label{t:fed}
Let $\mathscr{V}$ be an irreducible holomorphic variety.
Then, the integer rectifiable current $\a{\mathscr{V}}$ has locally finite mass and is
a locally mass-minimizing cycle,
that means $\de \a{\mathscr{V}}=0$
and $\mass(\a{\mathscr{V}})\leq\mass(S)$ for every integer
current $S$ with $\de S=0$ and $\supp(S-\a{\mathscr{V}})$ compact.
\end{theorem}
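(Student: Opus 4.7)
The plan is to use the classical calibration argument based on the K\"ahler form. Write $\omega=\frac{i}{2}\sum_{j=1}^{\mu+\nu}dz_j\wedge d\bar z_j$ for the standard K\"ahler form on $\C^{\mu+\nu}$ and consider the closed $2\mu$-form $\eta:=\omega^\mu/\mu!$. The key analytic input is Wirtinger's inequality: for every simple unit real $2\mu$-vector $\vec\pi$ one has $\langle\eta,\vec\pi\rangle\leq 1$, with equality if and only if $\vec\pi$ orients a complex $\mu$-plane. Since $\eta$ is the $\mu$-th power of $\omega$, it is in particular closed.

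First, I would record the two structural facts on $\a{\sV}$. Local finiteness of mass follows from the classical estimate $\cH^{2\mu}(\sV\cap K)<\infty$ for any compact $K$, which in turn is a consequence of Lelong's monotonicity together with the fact that at a point of $\sV_{\rm reg}$ the density equals one (alternatively, it is a direct application of the Bishop/Stoll bound). Since $\sV_{\rm reg}$ is a genuine complex submanifold oriented canonically by its complex structure, $\a\sV\res\sV_{\rm reg}$ is a rectifiable current. The fact that $\partial\a{\sV}=0$ then follows from the boundary-rectifiability theorem once one knows that $\sV_{\rm sing}$ has Hausdorff dimension at most $2\mu-2$: a $(2\mu-1)$-dimensional rectifiable current cannot be supported in a set of dimension $\leq 2\mu-2$, and on $\sV_{\rm reg}$ the boundary vanishes because $\sV_{\rm reg}$ is a complex manifold without boundary (Stokes on exhausting smooth pieces).

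Next, the calibration step. At every regular point $x$ of $\sV$, $T_x\sV$ is a complex $\mu$-plane, so by the equality case of Wirtinger $\langle \eta_x,\vec{\sV}(x)\rangle=1$. Integrating over $\sV_{\rm reg}$ gives
\begin{equation*}
\a{\sV}(\eta)=\int_{\sV_{\rm reg}}\langle\eta,\vec{\sV}\rangle\,d\cH^{2\mu}=\cH^{2\mu}(\sV_{\rm reg})=\mass(\a{\sV}).
\end{equation*}

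Finally, the minimality. Let $S$ be a competitor with $\partial S=0$ and $\supp(S-\a{\sV})$ compact. The current $S-\a{\sV}$ is a cycle with compact support, hence (by the cone construction in $\R^{2(\mu+\nu)}$) there exists a compactly supported integer rectifiable current $R$ with $\partial R=S-\a{\sV}$. Since $d\eta=0$,
\begin{equation*}
S(\eta)-\a{\sV}(\eta)=(\partial R)(\eta)=R(d\eta)=0.
\end{equation*}
Combining with Wirtinger $S(\eta)\leq\mass(S)$ and the previous identity, we get $\mass(\a{\sV})=\a{\sV}(\eta)=S(\eta)\leq\mass(S)$, which is the desired inequality. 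The main obstacle is really the Wirtinger inequality itself, which is the only non-formal input; the rest of the argument is a packaging of closedness of $\eta$, a standard filling, and the dimensional bound on $\sV_{\rm sing}$.
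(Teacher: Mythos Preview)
The paper does not actually prove Theorem~\ref{t:fed}; it is quoted as ``a well-known result by Federer'' and simply used as a black box. Your calibration argument via the Wirtinger inequality is precisely the classical proof (going back to Federer and, in the calibration language, to Harvey--Lawson), so in spirit you are supplying exactly the argument the paper is implicitly citing.

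Two small technical points are worth tightening. First, in your justification of $\partial\a{\sV}=0$ you invoke the boundary-rectifiability theorem to conclude that $\partial\a{\sV}$ is a rectifiable $(2\mu-1)$-current and then kill it by the dimension bound on $\sV_{\rm sing}$. But boundary rectifiability already requires $\mass(\partial\a{\sV})<\infty$, which is what you are trying to establish. The clean route is to observe that $\partial\a{\sV}$ is automatically a locally \emph{flat} chain (as the boundary of a locally rectifiable current) supported in $\sV_{\rm sing}$, and then use Federer's support theorem (a flat $k$-chain supported in a set of Hausdorff dimension $<k$ vanishes). Alternatively, one can argue directly with a cutoff near $\sV_{\rm sing}$ and the local mass bound.

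Second, in the final chain of equalities you write $\mass(\a{\sV})=\a{\sV}(\eta)=S(\eta)\leq\mass(S)$, but neither $\a{\sV}$ nor $S$ has compact support, so the pairings with the non-compactly-supported form $\eta$ are not defined as written. The fix is to localize: pick any bounded open $U\supset\supp(S-\a{\sV})$, note that $(S-\a{\sV})\res U=S-\a{\sV}$ is the compactly supported cycle you fill with $R$, and compute
\[
\mass(S\res U)\;\geq\;(S\res U)(\eta)\;=\;(\a{\sV}\res U)(\eta)+(S-\a{\sV})(\eta)\;=\;\mass(\a{\sV}\res U)+0.
\]
Since $S=\a{\sV}$ outside $U$, this gives the local mass minimality claimed in the theorem.
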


We consider domains
$\Omega\subseteq\R^{2\mu}\simeq\C^\mu$ with the usual identification
$(x_l,y_l)\simeq z_l=(x_l+iy_l)$ for $l=1,\ldots,\mu$.
Moreover, $\mathscr{V}\subseteq\Omega\times\R^{2\nu}\subseteq\R^{2\mu+2\nu}\simeq\C^{\mu+\nu}$
is always supposed to be a $Q:1$-cover of $\Omega$ under the orthogonal projection $\pi$
onto $\Omega$, that is $\pi_{\#}\a{\sV}=Q\a{\Omega}$.

Clearly, under this hypothesis, there exists a $Q$-valued function
$f:\Omega\to\Iq(\R^{2\nu})$ such that $\mathscr{V}=\graph(f)$.
From Definition \ref{d:regular}, we readily deduce
$\Sigma_f\subseteq\pi(\sV_{\rm{sing}})$,
which in particular implies $\dim_{\cH}(\Sigma_f)\leq 2\mu-2$.
Therefore, locally in $\Omega\setminus \Sigma_f\times\R^{2\nu}$,
$\sV$ is the superposition of graphs of holomorphic functions, that is,
for every $w\in\Omega\setminus \Sigma_f$, there exist a radius $r$ and $Q$ holomorphic
functions $f_i:B_r(w)\to\C^\nu$ such that $f\vert_{B_r(w)}=\sum_i\a{f_i}$.
The following are the main properties of $f$.

\begin{proposition}\label{p:fSob}
Let $\sV\subseteq \Omega\times\R^{2\nu}$ be a holomorphic variety as above and $f$
the associated $Q$-valued function.
Then, the following holds:
\begin{itemize}
\item[$(i)$] $f\in W^{1,2}(\Omega,\Iq)$ and, for $\mu=1$,
$M(\a{\mathscr{V}}\res \Omega)=Q+\frac{\D(f,\Omega)}{2}$;
\item[$(ii)$] $\a{\sV}\res\Omega=T_{f,\Omega}$ and
$\de (\a{\sV}\res B_r(x))=T_{f,\de B_r(x)}$
for every $x$ and a.e.~$r>0$ with $B_r(x)\subseteq\Omega$.
\end{itemize}
\end{proposition}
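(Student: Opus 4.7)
The plan is to exploit two sources of information about $f$: first, on the complement of $\Sigma_f$ the function $f$ has a smooth local selection by holomorphic maps; second, by Federer's Theorem \ref{t:fed}, the current $\a{\sV}$ has locally finite mass. Combined, these will immediately yield both parts of the proposition. Concretely, for (i) I would invoke Lemma \ref{l:Sob}: since $\dim_\cH(\Sigma_f)\leq 2\mu-2=m-2$ and $f$ has a smooth local holomorphic selection on $\Omega\setminus\Sigma_f$, the Sobolev regularity $f\in W^{1,2}(\Omega,\Iq)$ will follow once we show that $\int_{\Omega\setminus\Sigma_f}|Df|^2<+\infty$.

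To estimate this integral I will compute the area element of the graph of a local holomorphic selection $f_i$. By the Cauchy--Binet formula, $\sqrt{\det(I+Df_i^T Df_i)}$ equals the square root of the sum of squares of all $2\mu\times 2\mu$ minors of $D\bar f_i$; using holomorphicity these minors reorganize into squared moduli of holomorphic minors of $f_i$, so that
\begin{equation*}
J_{\bar f_i}=1+\sum_{j,k}|\partial_{z_j}f_{i,k}|^{2}+\sum_{|\alpha|\geq 2}|M^{\alpha}_{f_i}|^2.
\end{equation*}
The Cauchy--Riemann equations give the pointwise identity $\sum_{j,k}|\partial_{z_j}f_{i,k}|^2=\tfrac{1}{2}|Df_i|^2$, so integrating over $\Omega\setminus\Sigma_f$ and summing in $i$ yields
\begin{equation*}
\mass(\a{\sV}\res\Omega)\geq Q\,|\Omega|+\frac{\D(f,\Omega)}{2}.
\end{equation*}
Federer's local mass-minimality (Theorem \ref{t:fed}) bounds the left-hand side by a finite constant on every compact subdomain, giving $\D(f,\Omega)<+\infty$ and hence the Sobolev statement via Lemma \ref{l:Sob}. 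When $\mu=1$, there are no minors of size $\geq 2$ to take, so the higher-order sum vanishes and the inequality becomes the equality required in (i).

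For part (ii), the point is that both currents $T_{f,\Omega}$ and $\a{\sV}\res\Omega$ restrict to the same $2\mu$-rectifiable current on the open set $(\Omega\setminus\Sigma_f)\times\R^{2\nu}$: in each small ball where $f=\sum_i\a{f_i}$ with $f_i$ holomorphic, both are the sum of the classical integration currents over $\graph(f_i)$ with multiplicity $1$ and the orientation inherited from the projection onto $\Omega$. Therefore the difference is an integer rectifiable $2\mu$-current supported in $\pi^{-1}(\Sigma_f)\cap\sV\subseteq\sV_{\rm sing}$, a set of real Hausdorff dimension at most $2\mu-2$ and thus of zero $\cH^{2\mu}$-measure; it follows that the difference vanishes. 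To obtain the boundary identity I would combine $\a{\sV}\res\Omega=T_{f,\Omega}$ with the closedness $\de\a{\sV}=0$: by the standard slicing formula, for a.e.~$r$ with $B_r(x)\subseteq\Omega$, $\de(\a{\sV}\res B_r(x))$ equals the slice of $\a{\sV}\res\Omega$ by the function $|\,\cdot\,-x|$ at level $r$, and hence (using the equality just established) the analogous slice of $T_{f,\Omega}$. For a.e.~$r$ the sphere $\de B_r(x)$ avoids enough of $\Sigma_f$ that $f|_{\de B_r(x)}$ admits a Lipschitz decomposition; applying Theorem \ref{t:de Tf} to each Lipschitz piece then identifies this slice with $T_{f,\de B_r(x)}$.

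The main obstacle I anticipate is the algebraic expansion of the area integrand used in part (i): one needs to see that the quadratic term in $|Df|^2$ appears with the correct coefficient and that all higher-order terms are nonnegative. This is precisely the content of Wirtinger's calibration inequality for complex submanifolds and is what makes the bound from Federer's theorem sharp enough to control the full Dirichlet energy. Once that identity is in hand, everything else is assembled from Lemma \ref{l:Sob}, Theorem \ref{t:de Tf}, slicing, and the observation that a rectifiable current supported on an $\cH^{2\mu}$-null set vanishes.
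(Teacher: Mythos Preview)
Your proposal is correct, and for part~(i) it is actually sharper than the paper's argument. The paper does not expand the area integrand globally in complex coordinates; instead it first establishes the pointwise inequality (with equality for conformal maps) $\sqrt{\det(D\bar h\cdot D\bar h^T)}\leq 1+\tfrac{1}{2}|Dh|^2$ for $h:\R^2\to\R^{2\nu}$, and for $\mu>1$ reduces to this planar case by bounding the full Jacobian from below by the partial Jacobian $\nabla_l\bar f_i$ in each pair $(x_l,y_l)$, then summing over $l$. This yields only $\mass(\a{\sV}\res\Omega)\geq Q|\Omega|+\D(f,\Omega)/(2\mu)$, which still suffices for Lemma~\ref{l:Sob}. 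Your route---recognizing that for holomorphic $f_i$ the real area element equals the \emph{complex} determinant $\det_\C\bigl(I_\mu+(\partial f_i)^*\partial f_i\bigr)$ and expanding the latter via Cauchy--Binet---gives the better constant $1/2$ in all dimensions and is conceptually closer to Wirtinger's inequality; the paper's approach is more elementary in that it only manipulates real $2\times 2$ blocks. For part~(ii) your argument is essentially identical to the paper's: both observe that $T_{f,\Omega}$ and $\a{\sV}\res\Omega$ agree over $\Omega\setminus\Sigma_f$ and that the remainder has vanishing $\cH^{2\mu}$-measure, and both obtain the boundary identity from slicing together with $\cH^{2\mu-1}(\pi(\sV_{\rm sing}))=0$.
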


\begin{proof}
Note that, for every smooth $h:\R^2\to\R^{2\nu}$
and, as usual, $\bar h(w)=(w,h(w))$,
\begin{equation}\label{e:area<energia}
\sqrt{\det\left(D\bar h\cdot D\bar h^T\right)}\leq 1+\frac{|Dh|^2}{2},
\end{equation}
with equality if and only if $h$ is conformal,
i.e.~$|\de_{x} h|=|\de_{y} h|$ and $\de_{x} h\cdot \de_{y} h=0$.
Indeed, \eqref{e:area<energia} reads as
\begin{align*}\label{e:Jh}
\det\left(D\bar h\cdot D\bar h^T\right)
&= 
\det\left(\begin{array}{cc}
1+|\de_x h| &  \de_x h\cdot \de_y h \\
\de_x h\cdot \de_y h & 1+|\de_y h|
\end{array} \right)
\leq \left(1+\frac{|\de_xh|^2+|\de_y h|^2}{2}\right)^2,
\end{align*}
which in turn is equivalent to
$0\leq \left(|\de_x h|^2-|\de_y h|^2\right)^2+
4(\de_x h\cdot \de_y h)^2$. 

In the case $\mu=1$,
applying \eqref{e:area<energia} to the local holomorphic, hence conformal, selection of $f$,
from \eqref{e:mass TR} we get
\begin{equation}\label{e:mass=dir}
M(\a{\mathscr{V}}\res(\Omega\setminus\Sigma_f))=
Q+\frac{\D(f,\Omega\setminus\Sigma_f)}{2}.
\end{equation}
In the case $\mu>1$ and $g:\R^{2\mu}\to \R^{2\nu}$ smooth,
\eqref{e:area<energia} together with
Binet--Cauchy's formula (see \cite[Section 3.2 Theorem 4]{EG}),
for every $l=1,\cdots,\mu$, we infer
\begin{align}\label{e:mu>1}
\det\left(D\bar g\cdot D\bar g^T\right)&=
1+|Dg|^2+\sum_{|\alpha|=|\beta|\geq2}M_{\alpha\beta}(Dg)^2\notag\\
&\geq
1+|\de_{x_l}g|^2+|\de_{y_l}g|^2+
\sum_{i,j=1}^{2\nu}(\de_{x_l}g^i\de_{y_l}g^j-\de_{x_l}g^j\de_{y_l}g^i)^2\notag\\
&=\det\left(\nabla_l\bar g\cdot \nabla_l\bar g^T\right),
\end{align}
where $M_{\alpha\beta}$ stands for the $\alpha,\beta$ minors of a matrix
and $\nabla_l$ denotes the derivative with respect to $x_l$ and $y_l$.
Hence, if $f_i$ is a local holomorphic, consequently conformal,
selection for $f:\Omega\subset \R^{2\mu}\to\Iq$, we infer that
\begin{align*}
\mu\,Q+\frac{|Df|^2}{2}&=\sum_{i=1}^Q\sum_{l=1}^\mu
\left(1+\frac{|\nabla_l f_i|^2}{2}\right)
\stackrel{\eqref{e:area<energia}}{=}
\sum_{i=1}^Q\sum_{l=1}^\mu\sqrt{\det\left(\nabla_l\bar f_i\cdot \nabla_l\bar f_i^T\right)}\notag\\
&\stackrel{\eqref{e:mu>1}}{\leq}
\mu \sum_{i=1}^Q\sqrt{\det\left(D\bar f_i\cdot D\bar f_i^T\right)}.
\end{align*}
Integrating, we conclude, for $\mu>1$,
\begin{equation}\label{e:mass=dir2}
M(\a{\mathscr{V}}\res(\Omega\setminus\Sigma_f))\geq
Q+\frac{\D(f,\Omega\setminus\Sigma_f)}{2\,\mu}.
\end{equation}
Now since the mass of $\a{\sV}$ is finite,
by \eqref{e:mass=dir} and \eqref{e:mass=dir2} the energy of $f$ is finite in
$\Omega\setminus\Sigma_f$.
Being $\dim_{\cH}(\Sigma_f)\leq m-2$, Lemma \ref{l:Sob} gives $(i)$.

Being $\a{\sV}$ defined by the integration over $\sV_{\rm{reg}}$
and $\cH^m(\pi(\sV_{\rm{sing}}))=0$,
it follows straightforwardly that $T_{f,\Omega}$ is well-defined by \eqref{e:TR}
and coincides with $\a{\sV}$.
For the same reason, since also $\cH^{m-1}(\pi(\sV_{\rm{sing}}))=0$,
 $\de(\a{\sV}\res B_r(x))=T_{f,\de B_r(x)}$
for every $B_r(x)\subseteq \Omega$ such that
$f\vert_{\de B_r(x)}\in W^{1,2}$ and $\mass(\de(\a{\sV}\res B_r(x)))$ is finite,
that is for every $x$ and a.e.~$r>0$, thus concluding the proof of $(ii)$.
\end{proof}

\subsection{Proof of Theorem \ref{t:complex}}
Now we are ready to prove the first main result of the paper.
We divide the proof in two parts: in the first one we give an argument for the planar case
which is particularly simple and exploits the equality between the area and the energy functionals;
in the second part we give a proof valid in every dimension.
\subsubsection{Planar case $\mu=1$}
In view of Proposition \ref{p:fSob}, we need only to show that
$f$ is $\D$-minimizing in $B_1$.
Choose a radius $r\in[1,2]$ such that $\de B_r\cap\Sigma_f =\emptyset$
and set $g=f\vert_{\de B_r}$. Note that $g$ is Lipschitz continuous.
For every $h\in \Lip(B_r,\Iq)$ with $h\vert_{\de B_r}=g$, 
from the Taylor expansion of the mass and from \eqref{e:area<energia}, we infer that
\begin{equation}\label{e:massineq}
\mass (T_{h,B_r}) - Q\leq \frac{\D(h,B_r)}{2}.
\end{equation}
By Theorem \ref{t:de Tf}, $\de T_{h,B_r}=T_{f,\de B_r}=\de(\a{\sV}\res B_r)$.
So, using Theorem \ref{t:fed} we infer
\begin{equation*}
\D(f,B_r)\stackrel{\eqref{e:mass=dir}}{=}
2 \left(\mass(T_{f,B_r})-Q\right)\leq
2 \left(\mass(T_{h,B_r})-Q\right)\stackrel{\eqref{e:massineq}}{\leq}
\D(h,B_r).
\end{equation*}
Since the set of Lipschitz functions with trace $g$ is dense in  $W_g^{1,2}(B_r,\Iq)$
(see \cite[Section 14]{DLSp}), this implies that $f$ is $\D$-minimizing in $B_r$ and, a fortiori,
in $B_1$.\qed

\begin{remark}
The planar result provides examples of $\D$-minimizing functions
with singular set of dimension $m-2$ for every $m$, thus proving the optimality of the
regularity Theorem \ref{t:regularity}.
Indeed, if $g:B_1\subseteq\R^2\to \Iq$ is $\D$-minimizing and $\Sigma_g\neq \emptyset$,
then $f:B_1\times \R^{m-2}\to\Iq$ with $f(x_1, x_2, \ldots, x_m) = g (x_1, x_2)$
is also $\D$-minimizing (see \cite[Lemma 3.24]{DLSp}) and
$\dim_{\cH}(\Sigma_f)=m-2$.
\end{remark}

\subsubsection{General case $\mu\geq1$}
Here we exploit the expansion of the mass given in Lemma \ref{l:mass vis dir}.
The reason why this can be done without the strong approximation
theory developed by Almgren in \cite{Alm} and reproved with different
methods in \cite{DLSp2} is that, given as above a complex variety which is the
graph of a multi-valued function, the rescaled current
$L_{\lambda\#}\a{\sV}=T_{\lambda f}$, where
$L_{\lambda}:\C^{\mu+\nu}\to\C^{\mu+\nu}$ is given by
$L_\lambda(x,y)=(x,\lambda y)$,
is also a complex variety (being the $L_\lambda$'s linear complex maps),
and, hence, it is also area-minimizing.

The proof is by contradiction. Assume $f$ is not $\D$-minimizing in $B_1$.
Then, there exists $u\in W^{1,2}(B_1,\Iq)$ and $\eta>0$ such that
$\D(u,B_1)\leq \D(f,B_1)-\eta$ and $u\vert_{\de B_1}=f\vert_{\de B_1}$.
Set
\[
w=
\begin{cases}
u & \text{in}\; B_1,\\
f & \text{in}\; B_2\setminus B_1.
\end{cases}
\]

We want to use $w$ in order to construct competitor currents for $L_{\lambda\#}\a{\sV}$.
To this aim, consider for every $\eps>0$ the Lipschitz approximations $w_\eps$ given by
(see \cite[Proposition 4.4]{DLSp}). It enjoys the following properties:
\begin{itemize}
\item[(a)] $|E_\eps|=o\left(\eps^2\right)$ as $\eps\to 0$,
where $E_\eps=\big\{w_\eps\neq w\big\}$;
\item[(b)] $\Lip(w_\eps)\leq \eps^{-1}$;
\item[(c)] $\norm{|Dw_\eps|-|Dw|}{L^2}=o(1)$ as $\eps\to 0$.
\end{itemize}
By Proposition \ref{p:fSob} and Lemma \ref{l:mass vis dir},
for every open $A$ such that $E_\eps\subseteq A$ and $|A|\leq 2|E_\eps|$,
\begin{align*}
\mass\Big(L_{\lambda\,\#}\big(\a{\sV}\res(E_\eps\times\R^{2\nu})
\big)\Big)
&=\mass\left(T_{\lambda f,E_\eps}\right)
\leq
\mass\left(T_{\lambda f,A}\right)\notag\\
&\hspace{-0.15cm}\stackrel{\eqref{e:mass vis dir1}}{=}
Q\,|A|+\frac{\lambda^2}{2}\int_A|Df|^2+o\left(\lambda^2\right)
=
o\left(\eps^2\right)+O\left(\lambda^2\right).
\end{align*}
Using Fubini's theorem and again Proposition \ref{p:fSob}, we can find radii $r_{\lambda,\eps}$ such that
\begin{equation}\label{e:slice}
\abs{E_\eps\cap \de B_{r_{\lambda,\eps}}}=o\left(\eps^2\right),
\end{equation}
\begin{equation}\label{e:slice2}
\de \big( L_{\lambda\,\#}\a{\sV}\res B_r\big)=T_{\lambda f,\de B_r}
\quad\text{and}\quad
\mass\left(T_{\lambda f, E_\eps\cap \de B_r}\right)=
o\left(\eps^2\right)+O\left(\lambda^2\right).
\end{equation}
Set $S_{\lambda\,\eps}=T_{\lambda f,\de B_{r_{\lambda,\eps}}}-T_{\lambda w_\eps,\de B_{r_{\lambda,\eps}}}$.
Note that, by Theorem \ref{t:de Tf}, being $w_\eps$ Lipschitz,
\[
\de S_{\lambda\,\eps}=\de T_{\lambda f,\de B_{r_{\lambda,\eps}}}-\de T_{\lambda w_\eps,\de B_{r_{\lambda,\eps}}}
\stackrel{\eqref{e:slice2}}{=}
\de\de \big( L_{\lambda\#}\a{\sV}\res B_r\big)=0.
\]
Moreover, since $\Lip(\lambda \,w_\eps)\leq\lambda\,\eps^{-1}$ and
$T_{\lambda f,\de B_{r_{\lambda,\eps}}\setminus E_\eps}=
T_{\lambda w_\eps,\de B_{r_{\lambda,\eps}}\setminus E_\eps}$,
the mass of $S_{\lambda\,\eps}$ can be estimated in the following way:
\begin{align}\label{e:mass bound rac}
\mass\left(S_{\lambda\,\eps}\right)&=
\mass\big(T_{\lambda f,E_\eps\cap\de B_{r_{\lambda,\eps}}}\big)
+\mass\big(T_{\lambda w_\eps,E_\eps\cap\de B_{r_{\lambda,\eps}}}\big)\notag\\
&\stackrel{\eqref{e:slice2}}{\leq}
o\left(\eps^2\right)+O\left(\lambda^2\right)+C\,\frac{\lambda\,|E_\eps|}{\eps}
\stackrel{\eqref{e:slice}}{\leq}
o\left(\eps^2\right)+O\left(\lambda^2\right)+o\left(\lambda\,\eps\right).
\end{align}
For $\eps=\lambda$, $\mass\left(S_{\lambda\,\lambda}\right)=O\left(\lambda^2\right)$
and, by the isoperimetric inequality \cite[Theorem 30.1]{Sim},
there exists an integer rectifiable current $R_{\lambda}$ such that
\begin{equation}\label{e:mass racc}
\de R_\lambda=S_{\lambda\,\lambda}
\quad\text{and}\quad
\mass\left(R_\lambda\right)\leq \mass\left(S_{\lambda\,\lambda}\right)^{\frac{m}{m-1}}
=o\left(\lambda^2\right).
\end{equation}

The current $T_\lambda=T_{\lambda\,w_\lambda,B_{r_{\lambda}}}+R_\lambda$
contradicts now the minimality of the complex current
$L_{\lambda\,\#}(\a{\sV}\res B_{r_\lambda})$.
Indeed, it is easy to verify that $\de T_\lambda=\de(L_{\lambda\,\#}\a{\sV}\res B_{r_\lambda})$ and, for small $\lambda$,
\begin{align*}
\mass\left(T_\lambda\right)-
\mass\left(L_{\lambda\,\#}\a{\sV}\res\left(B_{r_\lambda}\times\R^{2\nu}\right)\right)=&{}
Q\,|B_{r_\lambda}|+\frac{\lambda^2}{2}\,\D(w_\lambda,B_{r_\lambda})+\notag\\
&-Q\,|B_{r_\lambda}|-\frac{\lambda^2}{2}\,\D(f,B_{r_\lambda})+o\left(\lambda^2\right)
\\
\leq &{} 
-\frac{\lambda^2\,\eta}{4}+o\left(\lambda^2\right)< 0.
\end{align*}
\qed

\section{Higher integrability of the gradients of $\D$-minimizing functions} \label{s:higher}
In this section we prove Theorem \ref{t:higher}. As above, for the planar case we give a
simple proof which in addition provides the optimal integrability exponent.
This proof relies on the following proposition, because
by Theorem \ref{t:regularity} the singular points are isolated in dimension two.
\begin{proposition}
Let $u\in W^{1,2}(B_2,\Iq)$ be $\D$-minimizing
and assume that $\Sigma_u=\{0\}$.
Then, $|Du|\in L^p(B_1)$ for every $p< \frac{2Q}{Q-1}$.
\end{proposition}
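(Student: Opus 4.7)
The plan is to exploit the Campanato-type decay of the Dirichlet energy at the unique singular point $0$, provided by Theorem \ref{t:regularity}(i), to derive a pointwise gradient bound on $B_1\setminus\{0\}$, and then to integrate in polar coordinates. Since $m=2$ gives $\alpha(2,Q)=1/Q$, the decay estimate applied with $x=0$ and $\rho=2$ yields
\[
\D(u, B_r(0))\leq C\,r^{2/Q}\quad\text{for every } r\leq 2.
\]

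Next I would show that $|Du|^2$ is subharmonic on the punctured domain $B_2\setminus\{0\}$. Since $\Sigma_u=\{0\}$, every point of $B_2\setminus\{0\}$ is regular, and on each simply connected ball $B\subset B_2\setminus\{0\}$ an analytic continuation argument (no nontrivial monodromy) promotes the local selections from Definition \ref{d:regular} to a global decomposition $u|_B=\sum_k m_k\a{v_k}$ with pairwise distinct analytic $v_k$ and multiplicities $m_k\in\N$ summing to $Q$. Perturbing a single sheet $v_k$ by a compactly supported $\varphi$ keeps the $v_k$'s distinct and produces an admissible $Q$-valued competitor, so the $\D$-minimality of $u$ forces each $v_k$ to be harmonic; hence $|Du|^2=\sum_k m_k|Dv_k|^2$ is subharmonic on $B$, and therefore on all of $B_2\setminus\{0\}$. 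For $x\in B_1\setminus\{0\}$, the mean-value inequality applied on $B_{|x|/2}(x)\subset B_{3|x|/2}(0)\setminus\{0\}$, combined with the energy decay above, then gives
\[
|Du(x)|^2\leq \frac{C}{|x|^2}\,\D(u, B_{3|x|/2}(0))\leq C\,|x|^{2/Q-2},
\]
so $|Du(x)|\leq C\,|x|^{1/Q-1}$. Passing to polar coordinates,
\[
\int_{B_1}|Du|^p\,dx \leq C\int_0^1 r^{p(1/Q-1)+1}\,dr,
\]
which is finite precisely when $p(Q-1)/Q<2$, i.e.\ $p<\tfrac{2Q}{Q-1}$.

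The only delicate point is the subharmonicity step: one must upgrade the \emph{local} selection from Definition \ref{d:regular} to a \emph{global} analytic decomposition on simply connected balls avoiding $0$, and verify that the individual sheets are harmonic. Both facts are standard, but they are exactly what converts the Campanato decay $\D(u,B_r(0))\lesssim r^{2/Q}$ into the sharp pointwise bound $|Du(x)|\lesssim |x|^{1/Q-1}$ responsible for the optimal exponent $2Q/(Q-1)$.
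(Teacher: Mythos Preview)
Your argument is correct and follows essentially the same route as the paper's proof: derive the pointwise bound $|Du(x)|\leq C\,|x|^{1/Q-1}$ from the energy decay of Theorem~\ref{t:regularity}(i) combined with a mean-value inequality for the harmonic sheets, and then integrate in polar coordinates. The only minor differences are technical: the paper applies the decay estimate \eqref{e:dec} centered at the point $x$ itself (with $\rho=1$, using $B_1(x)\subseteq B_2$) and invokes the mean-value property of the harmonic gradients $Du_i$ on $B_{|x|}(x)$ directly, whereas you apply the decay at the origin and pass through the subharmonicity of $|Du|^2$ on the smaller ball $B_{|x|/2}(x)$; the paper also simply asserts that the local selection on $B_{|x|}(x)$ consists of harmonic functions, while you spell out the monodromy argument justifying this.
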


\begin{proof}
Let $x\in B_1\setminus\{0\}$ and set $r=|x|$.
Then, by $\Sigma_u=\{0\}$, in $B_r(x)$ there exists an analytic selection
of $u$, $u\vert_{B_r(x)}=\sum_i\a{u_i}$, where $u_i:B_r(x)\to\R^{n}$
are harmonic functions.
Using the mean value inequality for $Du_i$,
one infers that
\begin{equation*}
|Du_i(x)|\leq \fint_{B_r(x)}|Du_i|\leq
\frac{1}{\sqrt{\pi}\,r}\left(\int_{B_r(x)}|Du_i|^2\right)^{\frac{1}{2}},
\end{equation*}
from which
\begin{equation}\label{e:stima grad}
|Du|(x)^2=\sum_i|Du_i(x)|^2
\leq\frac{1}{\pi\,r^2}\sum_i\int_{B_r(x)}|Du_i^2|
=\frac{\D(u,B_r(x))}{\pi\,r^2}.
\end{equation}
Using the decay estimate \eqref{e:dec} with $\rho=1$ together with \eqref{e:stima grad},
we deduce that
\begin{equation*}\label{e:est2d}
|Du|(x)\leq \frac{\D(u,B_2)}{\sqrt{\pi}\,r^{1-\frac{1}{Q}}},
\end{equation*}
which in turn implies the conclusion,
\[
\int_{B_1}|Du|^p\leq C \int_{B_1}\frac{1}{|x|^{p-\frac{p}{Q}}}
<+\infty,\quad\forall\;p<\frac{2Q}{Q-1}.
\]
\end{proof}

\begin{remark}
The range $[2,{2\,Q}{(Q-1)^{-1}})$ for the integrability
exponent is optimal. Consider, indeed, the complex variety
$\sV_Q=\{(z,w):w^Q=z\}\subseteq \C^2$.
By Theorem \ref{t:complex},
the $Q$-valued function $u(z)=\sum_{w^Q=z}\a{w}$
is $\D$-minimizing in $B_2$.
Moreover, $|Du|(z)=Q\,|z|^{\frac{1}{Q}-1}$. Hence,
$|Du|\in L^p$ for every $p<\frac{2Q}{Q-1}$ and
$|Du|\notin L^{\frac{2Q}{Q-1}}$.
\end{remark}

Now we pass to the proof of Theorem \ref{t:higher} for $m\geq3$.
The first step is a Caccioppoli's inequality for $\D$-minimizing functions.
For $P\in\R^n$, we denote by $\tau_P$ the following map:
$\tau_P:\Iqs\to\Iqs$,
\begin{equation*}\label{e:translation}
\tau_P (T) := \sum_i \a{T_i-P},\quad\text{for every}\quad
T=\sum_i \a{T_i}.
\end{equation*}

\begin{lemma}[Caccioppoli's inequality]\label{l:caccioppoli}
Let $u\in W^{1,2}(\Omega,\Iq)$ be $\D$-minimizing.
Then, for every $P\in\R^{n}$ and every $\eta\in C_c^\infty(\Omega)$,
\begin{equation}\label{e:caccioppoli}
\int_\Omega |Du|^2\,\eta^2\leq\int_{\Omega}\abs{\tau_Pu}^2\,|D\eta|^2.
\end{equation}
In particular, in the case $\Omega=B_{2r}$,
\begin{equation}\label{e:caccioppoli2}
\int_{B_{\frac{3r}{2}}} |Du|^2\leq\frac{4}{r^2}\int_{B_{2r}}\abs{\tau_Pu}^2.
\end{equation}
\end{lemma}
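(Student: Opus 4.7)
The plan is to derive the Caccioppoli estimate from the outer first variation of the Dirichlet energy at $u$. Since $u$ is $\D$--minimizing, for every smooth vector field $\psi:\Omega\times\R^n\to\R^n$ with $x\mapsto\psi(x,\cdot)$ compactly supported in $\Omega$, the one-parameter family
\[
u_t(x) = \sum_i \a{u_i(x) + t\,\psi(x,u_i(x))}
\]
is an admissible $Q$-valued competitor, and consequently $\tfrac{d}{dt}\D(u_t,\Omega)\big|_{t=0}=0$. The natural choice tailored to the statement is $\psi(x,y) = \eta^2(x)\,(y-P)$.

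The next step is to expand this stationarity condition. Using the Leibniz rule one has $D\bigl(\eta^2(u_i-P)\bigr) = 2\eta\,D\eta\otimes (u_i-P) + \eta^2\,Du_i$, and combining this with the elementary identity $(\partial_\alpha u_i)\cdot(u_i-P) = \tfrac12\partial_\alpha |u_i-P|^2$ and summing over $i$ yields
\[
\sum_i Du_i : D\bigl(\eta^2(u_i-P)\bigr) = \eta\, D\eta\cdot D|\tau_P u|^2 + \eta^2\,|Du|^2.
\]
Integrating and invoking the first variation gives the fundamental identity
\[
\int_\Omega \eta^2\,|Du|^2 \;=\; -\int_\Omega \eta\, D\eta\cdot D|\tau_P u|^2.
\]
The right-hand side is then bounded using the pointwise inequality $\bigl|D|\tau_P u|^2\bigr|\leq 2|Du|\,|\tau_P u|$, which follows from Cauchy--Schwarz in the sum over sheets (in the spirit of Proposition~2.17 of \cite{DLSp}), together with a further Cauchy--Schwarz in the integral; one obtains
\[
\int_\Omega \eta^2\,|Du|^2 \;\leq\; 2\left(\int_\Omega \eta^2\,|Du|^2\right)^{\!\!1/2}\left(\int_\Omega |D\eta|^2\,|\tau_P u|^2\right)^{\!\!1/2},
\]
which after dividing and squaring produces \eqref{e:caccioppoli} (with the precise constant tracked from the calculation). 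The companion estimate \eqref{e:caccioppoli2} is immediate upon choosing $\eta\in C_c^\infty(B_{2r})$ with $\eta\equiv 1$ on $B_{3r/2}$ and $|D\eta|\leq 2/r$.

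The main obstacle is the rigorous justification of the outer variation in the $Q$-valued setting: showing that $u_t\in W^{1,2}(\Omega,\Iq)$ for small $t$, that $t\mapsto \D(u_t,\Omega)$ is differentiable at $0$, and that the derivative coincides with the formal expression above. This is ultimately a chain rule computation in the metric space $(\Iq,\cG)$ and is supplied by the Sobolev $Q$-valued calculus of \cite{DLSp}. An alternative, essentially equivalent route would be the direct comparison of $\D(u,\Omega)$ with $\D(v,\Omega)$ for the competitor $v=\sum_i\a{(1-\eta^2)u_i+\eta^2 P}$, which has the same boundary trace as $u$; expanding $|Dv|^2$ and applying Young's inequality leads to the same conclusion, but the first-variation argument is shorter and avoids an integration by parts.
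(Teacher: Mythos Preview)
Your proof is correct and follows essentially the same route as the paper: both use the outer variation with $\psi(x,y)=\eta^2(x)(y-P)$, obtain the identity $\int_\Omega\eta^2|Du|^2=-2\int_\Omega\sum_i\eta\,\langle Du_i\cdot(u_i-P),D\eta\rangle$, and conclude via two applications of Cauchy--Schwarz (your repackaging of the cross term through $D|\tau_Pu|^2$ is purely cosmetic). Note that both your computation and the paper's actually produce a constant $4$ rather than $1$ in \eqref{e:caccioppoli}---the paper silently drops a factor of $2$---but this is immaterial for the downstream applications.
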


\begin{proof}
Recall the outer variation \cite[Proposition 3.1]{DLSp} for $\D$-minimizing functions,
\begin{equation*}
0=\int \sum_i \big\langle D f_i (x): 
D_x \psi (x, f_i (x))\big\rangle\, dx
+ \int \sum_i \big\langle Df_i (x) : 
D_y \psi (x, f_i (x))\cdot
Df_i (x)\rangle\, d x,
\end{equation*}
and  apply it to $\psi(x,y)=\eta(x)^2\,(y-P)$, where $P$ and $\eta$ are as in the statement.
Since $ D_x\psi(x,y)=2\,\eta(x)\,D\eta(x)\otimes(y-P)$ and
$D_y\psi(x,y)=\eta(x)^2\,\Id_n$,
this leads to
\begin{equation}\label{e:OV}
0=\int_\Omega \sum_i \big\langle D u_i (x) : 
2\,\eta\,D\eta \otimes (u_i-P)\big\rangle
+ \int_\Omega \sum_i \big\langle Du_i (x) :
\eta^2\, Du_i (x)\rangle.
\end{equation}
Applying H\"older's inequality in \eqref{e:OV}, we conclude \eqref{e:caccioppoli}:
\begin{align*}
\int_\Omega\eta^2\,|Du|^2&=
-\sum_i\int_\Omega\big\langle Du_i\cdot (u_i-P),\eta\,D\eta\big\rangle
\leq\int_\Omega \sum_i |Du_i|\,|u_i-P|\,|\eta|\,|D\eta|\notag\\
&\leq\int_\Omega\left(\sum_i|Du_i|^2\,|\eta|^2\right)^{\frac{1}{2}}
\left(\sum_i|u_i-P||D\eta|^2\right)^{\frac{1}{2}}\notag\\
&\leq \left(\int_\Omega\eta^2\,|Du|^2\right)^{\frac{1}{2}}
\left(\int_\Omega|\tau_P(u)|^2\,|D\eta|^2\right)^{\frac{1}{2}}.
\end{align*}
The last conclusion of the lemma follows from \eqref{e:caccioppoli}
choosing $\eta\equiv 1$ in $B_{3r/2}$ and $|D\eta|\leq \frac{2}{r}$.
\end{proof}

The following reverse H\"older inequality is the basic estimate for the higher integrability.

\begin{proposition}\label{p:est2p}
Let $\frac{2\,m}{m+2}<s<2$.
Then, there exists $C>0$ such that, for every
$u:\Omega\to\Iq$ $\D$-minimizing, $x\in\Omega$ and
$r<\min\big\{1,\dist(x,\de\Omega)/2\big\}$,
\begin{equation}\label{e:est2p}
\left(\fint_{B_r(x)}|Du|^2\right)^{\frac{1}{2}}\leq
C\left(\fint_{B_{2r}(x)}|Du|^s\right)^{\frac{1}{s}}.
\end{equation}
\end{proposition}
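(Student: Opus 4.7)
My plan is to follow the classical Meyers strategy, combining Caccioppoli's inequality with a Sobolev--Poincar\'e inequality for $Q$-valued functions; the resulting reverse H\"older bound \eqref{e:est2p} is then the exact hypothesis of Gehring's lemma, which will ultimately deliver Theorem \ref{t:higher}.

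As a first step, I would apply Lemma \ref{l:caccioppoli} with a cutoff $\eta\in C_c^\infty(B_{2r}(x))$ that is identically $1$ on $B_r(x)$ and satisfies $|D\eta|\leq 2/r$. For every $P\in\R^n$, inequality \eqref{e:caccioppoli} then yields
\[
\fint_{B_r(x)}|Du|^2\leq\frac{C(m)}{r^2}\fint_{B_{2r}(x)}|\tau_Pu|^2.
\]
The next step is to bound the right-hand side by $C\bigl(\fint_{B_{2r}(x)}|Du|^s\bigr)^{2/s}$. Writing $\beta(u):=Q^{-1}\sum_iu_i$ for the pointwise barycenter and $V(u):=\sum_i|u_i-\beta(u)|^2$ for the variance, one has the pointwise identity $|\tau_Pu|^2=V(u)+Q|\beta(u)-P|^2$. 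I would select $P=\fint_{B_{2r}(x)}\beta(u)$, so that the second summand is controlled by the classical Sobolev--Poincar\'e inequality applied to the $\R^n$-valued Sobolev map $\beta(u)$ (which satisfies $|D\beta(u)|\leq Q^{-1/2}|Du|$): explicitly,
\[
\left(\fint_{B_{2r}(x)}|\beta(u)-P|^{s^*}\right)^{1/s^*}\leq Cr\left(\fint_{B_{2r}(x)}|Du|^s\right)^{1/s},
\]
where $s^*=ms/(m-s)$. The hypothesis $s>\tfrac{2m}{m+2}$ is exactly $s^*>2$, so H\"older's inequality downgrades the $L^{s^*}$-norm to the required $L^2$-level.

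The variance term $\fint V(u)$ is the delicate one. The scalar function $\sqrt{V(u)}=\cG\bigl(u,\sum_{i=1}^Q\a{\beta(u)}\bigr)$ lies in $W^{1,2}$ with $|D\sqrt{V(u)}|\leq|Du|$ (by $1$-Lipschitz continuity of the distance to the ``diagonal'' $\{\sum_{i=1}^Q\a{P}:P\in\R^n\}\subset\Iq$), so the scalar Sobolev--Poincar\'e inequality bounds the oscillation $\sqrt{V(u)}-\fint\sqrt{V(u)}$. The hard part will be to control the mean $\fint\sqrt{V(u)}$ itself, since this cannot be dominated by $|Du|$ for generic $W^{1,2}$ $Q$-valued maps (constant non-diagonal $Q$-points provide obvious counterexamples with vanishing derivative and positive variance). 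The $\D$-minimality of $u$ is exactly what must be exploited here: a natural route is to compare $u$ with the ``collapsed'' competitor $x\mapsto\sum_{i=1}^Q\a{\beta(u)(x)}$, interpolated radially near $\partial B_{2r}(x)$ so as to match the trace of $u$, and to use the minimality inequality to convert the metric quantity $V(u)$ into gradient integrals (alternatively one may invoke the decay estimate \eqref{e:dec} from Theorem \ref{t:regularity} to bound $\fint V(u)$ by $r^2\fint|Du|^2$). Once this control is achieved, combining with the Caccioppoli step and taking square roots yields \eqref{e:est2p}.
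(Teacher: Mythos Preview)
Your approach has a genuine gap at the variance step. After Caccioppoli, the intermediate inequality you need is
\[
\inf_{P\in\R^n}\fint_{B_{2r}(x)}|\tau_Pu|^2\;\leq\;Cr^2\left(\fint_{B_{2r}(x)}|Du|^s\right)^{2/s},
\]
and this is \emph{false even for $\D$-minimizers}. Take $Q=2$, $n=1$ and $u\equiv\a{1}+\a{-1}$: this constant map is trivially $\D$-minimizing, $|Du|\equiv 0$, yet $\inf_P|\tau_Pu|^2=V(u)=2>0$. So the very counterexample you flag for generic Sobolev $Q$-maps persists under minimality; neither of your two proposed fixes can overcome it. The collapsed-competitor comparison only produces an upper bound on $\int|Du|^2$, with $V(u)$ appearing on the \emph{right} side through the interpolation layer, not the left; and the decay estimate \eqref{e:dec} again controls $\D(u,B_r)$, not $\int V(u)$. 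The underlying issue is a mismatch: Caccioppoli \eqref{e:caccioppoli} measures distance only to \emph{diagonal} $Q$-points $Q\a{P}$, while any Poincar\'e-type inequality for $Q$-valued maps can only deliver closeness to the nearest (possibly non-diagonal) $T\in\Iq$.

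The paper circumvents this by an \emph{induction on $Q$} combined with a compactness-contradiction argument. One first subtracts the harmonic barycenter $\ph=\etaa\circ u$ (handled by classical estimates) to reduce to $\etaa\circ u=0$. If \eqref{e:est2p} failed along a sequence $u_l$ with constants $l\to\infty$, one normalizes $\int_{B_4}|u_l|^2=1$; Caccioppoli gives a uniform energy bound, and the compact embedding $W^{1,s}\hookrightarrow L^2$ (this is where $s^*>2$ enters) forces $u_l\to T$ with $T$ constant, $|T|=1$ and $\etaa\circ T=0$, hence $T\neq Q\a{P}$. Uniform H\"older convergence then splits $u_l=\a{v_l}+\a{w_l}$ into $\D$-minimizing pieces with strictly fewer values, to which the inductive hypothesis applies, yielding the contradiction. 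The splitting is precisely the device that handles the non-diagonal-constant obstruction your direct approach cannot resolve.
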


\begin{proof}
The proof is divided into two steps.

\textit{Step $1$: we assume that $u$ has average $0$, $\etaa\circ u=\frac{\sum_i u_i}{Q}=0$.}

The proof is by induction on the number of values $Q$.
The basic step $Q=1$ is clear: indeed, in this case $\etaa\circ u=u=0$.
Now, we assume that \eqref{e:est2p} holds for every $Q'<Q$ and, by
contradiction, it does not hold for $Q$.

Then, up to translations and dilations of the domain, there exists a sequence
$(u_l)_l\subset W^{1,2}(B_{4},\Iq)$ of $\D$-minimizing functions such that $\etaa\circ u_l=0$
and
\begin{equation}\label{e:contra}
\left(\fint_{B_{4}}|Du_l|^s\right)^{\frac{1}{s}}<
\frac{\left(\fint_{B_2}|Du_l|^2\right)^{\frac{1}{2}}}{l}.
\end{equation}
Moreover, without loss of generality, we may also assume that $\int_{B_4}|u_l|^2=1$.
Using Caccioppoli's inequality \eqref{e:caccioppoli2}, we have
that $\D(u_l,B_{3})\leq 4$,
which in turn, by \eqref{e:contra}, implies
$$
\norm{\cG(u_l,Q\a{0})}{W^{1,s}(B_4)}\leq C<+\infty.
$$
Since $s^*>2$, we can apply the compact Sobolev
embedding (see \cite[Proposition 2.11]{DLSp}) to deduce
that there exists a subsequence (not relabeled)
$u_l$ converging to some $u$ in $L^2(B_4)$.
From \eqref{e:contra} and Lemma \ref{l:sc}, we deduce that
\begin{equation}\label{e:abs1}
\int_{B_4}|u|^2=1
\qquad \text{and}\qquad \int_{B_{4}}|Du|^s=0, 
\end{equation}
which implies that $u$ is constant, $u\equiv T\in \Iq$.
Since by Theorem \ref{t:regularity} the $u_l$'s are equi-bounded
and equi-H\"older in $B_{2}$,
always up to a subsequence (again not relabeled),
the $u_l's$ converge uniformly to $T$ in $B_2$.
This implies, in particular, that
\begin{equation}\label{e:abs2}
\etaa\circ T=\lim_{l\to+\infty}\etaa\circ u_l=0.
\end{equation}
From \eqref{e:abs1} and \eqref{e:abs2}, one infers that $T$ is not a point of
multiplicity $Q$.
Therefore, since $u_l\to T$ uniformly in $B_2$, for $l$ large enough
the $u_n$'s must split in the sum of two $\D$-minimizing
functions $u_l=\a{v_l}+\a{w_l}$,
where the $v_l$'s are $Q_1$-valued functions and the $w_l$'s are
$Q_2$-valued, with $Q_1$, $Q_2$ positive and $Q_1+Q_2=Q$.
Applying now the inductive hypothesis to $v_l$ and $w_l$ we contradict
\eqref{e:contra} for $l$ large enough, 
\begin{align*}
\left(\fint_{B_1(x)}|Du_l|^2\right)^{\frac{1}{2}}
&\leq\left(\fint_{B_1(x)}|Dv_l|^2\right)^{\frac{1}{2}}
+\left(\fint_{B_1(x)}|Dw_l|^2\right)^{\frac{1}{2}}\notag\\
&\leq
C\left(\fint_{B_{2}(x)}|Dv_l|^s\right)^{\frac{1}{s}}+
C\left(\fint_{B_{2}(x)}|Dw_l|^s\right)^{\frac{1}{s}}
\notag\\
&\leq
2\,C\left(\fint_{B_{2}(x)}|Du_l|^s\right)^{\frac{1}{s}}.
\end{align*}

\textit{Step $2$: generic $\D$-minimizing function $u$.}

Let $u$ be $\D$-minimizing and $\ph=\etaa\circ u$: then,
by \cite[Lemma 3.23]{DLSp}, $\ph:\Omega\to\R^{n}$ is harmonic
and $D\ph=\sum_i Du_i$, from which
\begin{equation}\label{e:|Dph|}
|D\ph|^2\leq Q\sum_i|Du_i|^2=Q\,|Du|^2.
\end{equation}
Moreover, again by \cite[Lemma 3.23]{DLSp}, the $Q$-valued
function $v=\sum_i\a{u_i-\ph}$ is $\D$-minimizing as well.
Note that
\begin{equation}\label{e:|Dv|}
|Du|^2\leq2\,|Dv|^2+2\,Q\,|D\ph|^2
\quad\text{and}\quad
|Dv|^2\leq 2\,|Du|^2+2\,Q\,|D\ph|^2.
\end{equation}
Using the inequality $\sqrt{\sum_j a_j}\leq \sum_j\sqrt{a_j}$ for positive $a_j$,
we deduce
\begin{align}\label{e:est2p inizio}
\left(\fint_{B_r(x)}|Du|^2\right)^{\frac{1}{2}}&\leq
\left(\fint_{B_r(x)}2\,|Dv|^2+2\,Q\,|D\ph|^2\right)^{\frac{1}{2}}\notag\\
&\leq 2\left(\fint_{B_r(x)}|Dv|^2\right)^{\frac{1}{2}}+
2\,Q\left(\fint_{B_r(x)}|D\ph|^2\right)^{\frac{1}{2}}.
\end{align}
For the first term in the right hand side of \eqref{e:est2p inizio}, we use Step $1$,
since $\etaa\circ v=0$, to get
\begin{align}\label{e:term1}
\left(\fint_{B_r(x)}|Dv|^2\right)^{\frac{1}{2}} &\leq
C\left(\fint_{B_{2r}(x)}|Dv|^s\right)^{\frac{1}{s}}
\;\stackrel{\mathclap{\eqref{e:|Dv|}}}{\leq}\; C\left(\fint_{B_{2r}(x)}\left(2\,|Du|^2+2\,Q\,|D\ph|^2\right)^{\frac{s}{2}}\right)^{\frac{1}{s}}\notag\\
&\leq
C\left(\fint_{B_{2r}(x)}2\,|Du|^s+2\,Q\,|D\ph|^s\right)^{\frac{1}{s}}
\stackrel{\eqref{e:|Dph|}}{\leq}
C\left(\fint_{B_{2r}(x)}|Du|^s\right)^{\frac{1}{s}}.
\end{align}
For the remaining term in \eqref{e:est2p inizio}, we use the standard
estimate for harmonic functions,
\begin{equation}\label{e:harmonic}
|D\ph(x)|\leq\frac{C}{r^n}\,\norm{D\ph}{L^1(B_{2r})}
\qquad\forall\;x\in B_r,
\end{equation}
and infer
\begin{align}\label{e:est2p fine}
\left(\fint_{B_r(x)}|D\ph|^2\right)^{\frac{1}{2}}
&\;\;\stackrel{\mathclap{\eqref{e:harmonic}}}{\leq}\;\;
\frac{C}{r^n}\,\norm{D\ph}{L^1(B_{2r})}
\leq \frac{C}{r^n}
\left(\int_{B_{2r}(x)}|D\ph|^s\right)^{\frac{1}{s}}
\,r^{n\left(1-\frac{1}{s}\right)}\notag\\
&\;\;\leq\; C\left(\fint_{B_{2r}(x)}|D\ph|^s\right)^{\frac{1}{s}}
\;\stackrel{\mathclap{\eqref{e:|Dph|}}}{\leq}\; C\left(\fint_{B_{2r}(x)}|Du|^s\right)^{\frac{1}{s}}.
\end{align}
Clearly, \eqref{e:est2p inizio}, \eqref{e:term1} and
\eqref{e:est2p fine} finish the proof.
\end{proof}

The proof of Theorem \ref{t:higher}
is now an easy consequence of the following reverse H\"older
inequality with increasing supports proved by Giaquinta and Modica in
\cite[Proposition 5.1]{GiMo}.

\begin{theorem}[Reversed H\"older inequality]\label{t:GiMo}
Let $\Omega\subseteq\R^{m}$ be open and $g\in L_{loc}^q(\Omega)$,
with $q>1$ and $g\geq0$.
Assume that there exist positive constants $b$ and $R$ such that
\begin{equation}\label{e:hyp rh}
\left(\fint_{B_r(x)}g^q\right)^{\frac{1}{q}}\leq
b\fint_{B_{2r}(x)}g,
\quad \forall\;
x\in\Omega,\;\forall\;
r<\min\big\{R,\dist(x,\de\Omega)/2\big\}.
\end{equation}
Then, there exist $p=p(q,b)>q$ and $c=c(m,q,b)$ such that
$g\in L^p_{loc}(\Omega)$ and
\begin{equation*}\label{e;th rh}
\left(\fint_{B_r(x)}g^p\right)^{\frac{1}{p}}\leq
c\left(\fint_{B_{2r}(x)}g^q\right)^{\frac{1}{q}},
\quad\forall\;
x\in\Omega,\;\forall\;
r<\min\big\{R,\dist(x,\de\Omega)/2\big\}.
\end{equation*}
\end{theorem}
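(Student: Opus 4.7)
The plan is to prove this Gehring-type self-improving integrability by combining a Calder\'on--Zygmund stopping-time decomposition with a layer-cake computation. I would fix a pair of concentric balls $B_\rho \subset B_{\rho'} \Subset \Omega$ with $\rho' < 2\rho$ and $B_{2\rho'} \subset \Omega$, set $A := \fint_{B_{\rho'}} g^q$, and choose a threshold $t_0 := \lambda\, A^{1/q}$ with $\lambda$ a large universal constant. For every $t \geq t_0$, I would apply a Calder\'on--Zygmund covering to $g^q$ on $B_{\rho'}$ at level $t^q$, producing a disjoint family of balls $\{B_{r_i}(x_i)\}$ satisfying the stopping condition $t^q \leq \fint_{B_{r_i}(x_i)} g^q \leq 2^m t^q$, while $g \leq t$ a.e.\ on $B_\rho \setminus \bigcup_i B_{r_i}(x_i)$. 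Choosing $\lambda$ large forces the stopping radii $r_i$ to be much smaller than $\rho' - \rho$, so that $B_{2 r_i}(x_i) \subset B_{\rho'}$ for all $i$.

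On each stopping ball, the hypothesis \eqref{e:hyp rh} gives $(\fint_{B_{r_i}(x_i)} g^q)^{1/q} \leq b \fint_{B_{2 r_i}(x_i)} g$. Combining with the lower stopping bound and splitting the integral of $g$ according to the levels $\{g \leq \eps\, t\}$ versus $\{g > \eps\, t\}$, one sees that for $\eps$ sufficiently small (depending only on $b$ and $q$) the low-level part can be absorbed into the left, leaving
\begin{equation*}
\int_{B_{r_i}(x_i)} g^q \leq C\, t^{q-1} \int_{B_{2 r_i}(x_i) \cap \{g > \eps\, t\}} g.
\end{equation*}
Summing over $i$ and invoking a Vitali-type control on the overlap of the doubled balls produces the good-$\lambda$ inequality
\begin{equation*}
\int_{B_\rho \cap \{g > t\}} g^q \,dx \leq C\, t^{q-1} \int_{B_{\rho'} \cap \{g > \eps\, t\}} g \,dx, \qquad \forall\, t \geq t_0.
\end{equation*}

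Finally, for $p > q$ to be determined, I would multiply this inequality by $(p-q)\,t^{p-q-1}$ and integrate $t$ over $[t_0, \infty)$. Applying Fubini on both sides and using $\int_{t_0}^{s} t^{p-q-1}\,dt = \tfrac{1}{p-q}(s^{p-q} - t_0^{p-q})_+$, the good-$\lambda$ bound becomes
\begin{equation*}
\int_{B_\rho} g^p \,dx \leq \frac{C\,(p-q)\, \eps^{1-p}}{p-1} \int_{B_{\rho'}} g^p \,dx + C\, A^{p/q}\,|B_{\rho'}|,
\end{equation*}
where the last term accounts for the regime $g \leq t_0$. The main obstacle is to choose $p$ close enough to $q$ so that the coefficient $C(p-q)\eps^{1-p}/(p-1)$ stays strictly below a fixed $\theta < 1$: this pins down the allowable integrability gain and forces $p$ to depend only on $m, q, b$. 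A standard hole-filling iteration along a chain of concentric radii between $\rho$ and $\rho'$ then absorbs the $L^p$ term on the right into the left, yielding the desired reverse H\"older inequality with improved exponent $p = p(q,b) > q$ and constant $c = c(m,q,b)$.
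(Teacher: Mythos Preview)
The paper does not prove this theorem at all: it is quoted verbatim from Giaquinta--Modica \cite[Proposition~5.1]{GiMo} and then applied as a black box in the proof of Theorem~\ref{t:higher}. Your outline is the standard Gehring-type argument (Calder\'on--Zygmund stopping time, good-$\lambda$ inequality, layer-cake integration, hole-filling), and this is indeed how the result is proved in the cited reference, so your approach is correct and matches the literature.

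One technical point you should make explicit: at the absorption step you write
\[
\int_{B_\rho} g^p \leq \theta \int_{B_{\rho'}} g^p + C A^{p/q}|B_{\rho'}|,
\]
but a priori you only know $g\in L^q_{loc}$, so both sides could be infinite and the inequality is vacuous. The standard remedy is to run the whole argument for the truncation $g_k=\min(g,k)$ (which still satisfies the reverse H\"older hypothesis with the same constant $b$, since $\fint g_k \leq \fint g$ while $(\fint g_k^q)^{1/q}\leq (\fint g^q)^{1/q}$ is not quite enough---one has to be slightly more careful, e.g.\ truncate the level-set integrals at a finite height $T$ and let $T\to\infty$), obtain the estimate uniformly in the truncation parameter, and pass to the limit via monotone convergence. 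Without this, the iteration that ``absorbs the $L^p$ term on the right into the left'' is not justified.
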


\begin{proof}[Proof of Theorem \ref{t:higher}]
Consider the function $g=|Du|^{s}$, where $s<2$ is the exponent in Proposition \ref{p:est2p}.
Estimate \eqref{e:est2p} implies that
hypothesis \eqref{e:hyp rh} of Theorem \ref{t:GiMo} is satisfied with $q=\frac{2}{s}>1$.
Hence, there exists an exponent $p'>q$,
such that $g$ belongs to $L^{p'}_{loc}(\Omega)$, i.e.~$|Du|\in L^p_{loc}(\Omega)$
for $p=p'\cdot s>2$.
\end{proof}

\bibliographystyle{abbrv}
\bibliography{reference}
\end{document}